\documentclass[11pt]{amsart}
\usepackage [T1]{fontenc}
\usepackage [latin1]{inputenc}
\usepackage{amssymb,amsmath,amsthm}
\usepackage{verbatim}
\usepackage{graphicx}
\usepackage{epsfig}

\usepackage{color, soul}
\usepackage[all]{xy}

\setlength{\oddsidemargin}{.3cm} \setlength{\topmargin}{0cm}
\setlength{\evensidemargin}{0.3cm} \setlength{\textwidth}{15.5cm}
\setlength{\textheight}{23cm}

%
%


\definecolor{dgreen}{cmyk}{1,.5,1,.2}

%
%
%

\newtheorem{theorem}{Theorem}[section]
\newtheorem{definition}[theorem]{Definition}

\newtheorem{example}[theorem]{Example}

\newtheorem{proposition}[theorem]{Proposition}
\newtheorem{corollary}[theorem]{Corollary}
\newtheorem{remark}[theorem]{Remark}

\def\s{\backslash}
\def\P{\mathcal{P}^n}
\def\Q{\mathcal{Q}}
\def\U{\mathfrak{A}}
\thispagestyle{empty}

\baselineskip=1.35\baselineskip

\begin{document}

\title[The Symmetric Radon Nikodým property for tensor norms]{The Symmetric Radon Nikodým property for tensor norms.}

\author{Daniel Carando}

\author{Daniel Galicer}

\thanks{The first author was partially supported by ANPCyT PICT 05 17-33042, UBACyT Grant X038 and ANPCyT PICT 06 00587. The second author was partially supported by ANPCyT PICT 05 17-33042, UBACyT Grant X863 and a Doctoral fellowship from CONICET}

\address{Departamento de Matem\'{a}tica - Pab I,
Facultad de Cs. Exactas y Naturales, Universidad de Buenos Aires,
(1428) Buenos Aires, Argentina, and CONICET.} \email{dcarando@dm.uba.ar}
\email{dgalicer@dm.uba.ar}

\maketitle

\begin{abstract} We introduce the symmetric-Radon-Nikodým property (sRN property) for
finitely generated s-tensor norms $\beta$ of order $n$
and prove a Lewis type theorem for s-tensor norms with this
property. As a consequence, if $\beta$ is a projective s-tensor norm with the sRN property, then
for every Asplund space $E$, the canonical map
$\widetilde{\otimes}_{ \beta }^{n,s} E' \rightarrow
\Big(\widetilde{\otimes}_{ \beta' }^{n,s} E \Big)'$ is a
metric surjection.  This can be rephrased as the isometric isomorphism $\mathcal{ Q }^{min}(E) = \mathcal{Q}(E)$ for certain polynomial ideal $\Q$.
We also relate the sRN property of an s-tensor norm with the Asplund or Radon-Nikod\'{y}m properties of different tensor products. Similar results for full tensor products are also given.
As an application, results concerning the ideal of $n$-homogeneous extendible polynomials are obtained, as well as a new proof of the well known isometric isomorphism between nuclear and integral polynomials on Asplund spaces.
\end{abstract}

\section*{Introduction}

A result of Boyd and Ryan \cite{BoydRyan01} and also of Carando and Dimant \cite{CarDim00}
implies  that, for an Asplund space $E$, the space $\P_I(E)$ of
integral polynomials is isometric to the space $\P_N(E)$ of
nuclear polynomials (the isomorphism between these spaces was previoulsy obtained by Alencar in \cite{Alencar85,Alencar85(reflexivity)}).
In other words, if $E$ is Asplund, the space of integral
polynomials on $E$ coincides isometrically with its minimal hull $(\P_I)^{min}(E)=\P_N(E)$. This fact was used, for example, in \cite{BoydRyan01,BoyLasXX,Din03} to study geometric properties of spaces of polynomials and tensor products (e.g., extreme and exposed points of their unit balls), and in \cite{BoyLas05,BoyLas06} to characterize isometries between spaces of polynomials and centralizers of symmetric tensor products. When the above mentioned isometry is stated as the isometric coincidence between a maximal ideal and its minimal hull, it resembles the Lewis theorem for operator ideals and (2-fold) tensor norms (see \cite{Lewis} and also \cite[Section 33.3]{DefFlo93}). The Radon-Nikod\'{y}m property for tensor norms is a key ingredient for Lewis theorem.

The aim of this article is to find conditions under which the
equality $\Q(E)=\Q^{min}(E)$ holds isometrically for a maximal polynomial ideal
$\Q$.
In terms of symmetric tensor products,
we want conditions on an s-tensor norms ensuring the isometry
$\widetilde{\otimes}_{\beta}^{n,s} E' =
\big(\widetilde{\otimes}_{\beta'}^{n,s}E\big)'$.
To this end, we introduce the symmetric Radon-Nikod\'{y}m property for s-tensor norms and we show our main result, a Lewis-type Theorem (Theorem~\ref{Lewis theorem for polynomials}): if
an s-tensor norm has the symmetric Radon-Nikodým property (sRN
property),  we have that the canonical map $
\widetilde{\otimes}_{\s \beta /}^{n,s} E'  \rightarrow
\Big(\widetilde{\otimes}_{/ \beta' \s}^{n,s} E \Big)'$ is a
metric surjection for every Asplund space $E$ (see the notation below). As a consequence,
if $\mathcal{Q}$ is the maximal ideal (of n-homogeneous
polynomials) associated with a projective s-tensor norm $\beta$ with the sRN property, then
$\mathcal{ Q }^{min}(E) = \mathcal{Q}(E)$ isometrically.

As an application of this result, we reprove the isometric isomorphism between integral and nuclear polynomials on Asplund spaces (note that the result proved in \cite{BoydRyan01,CarDim00} is stronger). We also show that the ideal of extendible polynomials coincide with its minimal hull for Asplund spaces, and obtain as a corollary that the space of extendible polynomials on $E$ has a monomial basis whenever $E'$ has a basis.

We present examples of s-tensor norms associated to well known polynomial ideals which have the sRN property. We also relate the sRN property of an s-tensor norm with the Asplund property. More precisely, we show that, for $\beta$ is projective with the sRN, then $\beta'$ preserves the Asplund property, in the sense that $\widetilde{\otimes}_{ \beta' }^{n,s} E $ is Asplund whenever $E$ is. As an application, we show that the space of extendible polynomials on $E$ has the Radon-Nikod\'{y}m property if and only if $E$ is Asplund.  One might be tempted to infer that a projective  $\beta$ with the sRN property preserves the Radon-Nikod\'{y}m property, but this is not the case, as can be concluded from a result by Bourgain and Pisier~\cite{BouPis83}. However, we show that this is true with additional assumptions on the space $E$.

In order to prove our main theorem, we must show an analogous result for full tensor norms, which we feel can be of independent interest. It should be noted that, although we somehow follow some ideas of Lewis Theorem's proof in \cite[Section 33.3]{DefFlo93}, that proof is based on some factorizations of linear operators and not on properties of bilinear forms. Therefore, the weaker nature of the symmetric Radon-Nikod\'{y}m property introduced in this work together with our multilinear/polynomial framework makes our proof more complicated. As a consequence, we decided to postpone the proof of the main result to  Section~\ref{proof of big heorem}. The article is then organized as follows:
In Section \ref{preliminares} we recall same basic definitions
and facts about the theory of full and symmetric tensor norms and set some notation. In
Section~\ref{Seccion-SRN} we define the symmetric Radon-Nikod\'{y}m property, state our main theorem and prove the related results described above. We also exhibit some examples of tensor norms having the sRN property.
In Section~\ref{Seccion-full simetricos} we consider the sRN property in full tensor products and show the Lewis-type result for this spaces (Theorem~\ref{Lewis theorem}). In Section~\ref{proof of big heorem} we give the proof of Theorem~\ref{Lewis theorem for polynomials} and conclude the article with some questions.

We refer to \cite{DefFlo93} for the theory of tensor norms and operator ideals, and to \cite{Flo97,Flo01,Flo02(On-ideals),Flo02(Ultrastability)} for symmetric tensor products and polynomial ideals.

\section{Preliminaries}\label{preliminares}

For a natural number $n$, a full tensor norm $\alpha$ of order $n$ assigns to every $n$-tuple of Banach spaces $(E_1, \dots, E_n)$ a norm $\alpha \big(\; . \; ; \otimes_{i=1}^n E_i \big)$ on the $n$-fold (full) tensor product $\otimes_{i=1}^n E_i$ such that

\begin{enumerate}
\item $\varepsilon \leq \alpha \leq \pi$ on $\otimes_{i=1}^n E_i$.
\item $\| \otimes_{i=1}^n T_i :  \big( \otimes_{i=1}^n E_i, \alpha \big) \to \big( \otimes_{i=1}^n F_i, \alpha \big) \|  \leq \|T_1\| \dots \|T_n\|$ for each set of operator $T_i~\in~\mathcal{L}(E_i, F_i)$, $i=1, \dots, n$.
\end{enumerate}

We say that $\alpha$ is finitely generated if for all $E_i \in BAN$ (the class of all Banach spaces) and $z$ in $\otimes_{i=1}^n E_i$
$$ \alpha ( z, \otimes_{i=1}^n E_i ) : = \inf \{ \alpha (z, \otimes_{i=1}^n M_n ) : z \in \otimes_{i=1}^n M_i \},$$
the infimum being taken over all $n$-tuples $M_1, \dots, M_n$ of finite dimensional subspaces of $E_1, \dots, E_n$ respectively whose tensor product contains $z$.

We often call these tensor norms ``full tensor norms'', in the sense that they are defined on the full tensor product, to distinguish them from the s-tensor norms, that are defined on symmetric tensor products.

We say that  $\beta$ is a s-tensor norm  of order $n$ if $\beta$ assigns to each Banach space $E$ a norm $\beta \big(\; . \;; \otimes^{n,s} E \big)$ on the $n$-fold symmetric tensor product $\otimes^{n,s} E$ such that
\begin{enumerate}
\item $\varepsilon_s \leq \beta \leq \pi_s$ on $\otimes^{n,s} E$.
\item $\| \otimes^{n,s} T :  \big( \otimes^{n,s} E, \beta \big) \to \big( \otimes^{n,s} F, \beta \big) \| \leq \|T\|^n$ for each operator $T~\in~\mathcal{L}(E, F)$.
\end{enumerate}

$\beta$ is called finitely generated if for all $E \in  BAN$ and $z \in \otimes^{n,s} E$
$$ \beta (z, \otimes^{n,s}E) = \inf \{ \alpha(z, \otimes^{n,s}M) : M \in FIN(E), z \in \otimes^{n,s}M \}.$$

In this article we will only work with finitely generated tensor norms. Therefore, we will assume that all tensor norms are always finitely generated.

If $\alpha$ is a full tensor norm of order $n$, then the dual tensor norm $\alpha'$ is defined on $FIN$ (the class of finite dimensional Banach spaces) by

$$  \big( \otimes_{i=1}^n M_i, \alpha' \big) :\overset 1 = [\big( \otimes_{i=1}^n M_i', \alpha \big)]'$$
and on BAN by
$$ \alpha' ( z, \otimes_{i=1}^n E_i ) : = \inf \{ \alpha' (z, \otimes_{i=1}^n M_n ) : z \in \otimes_{i=1}^n M_i \},$$
the infimum being taken over all $n$-tuples $M_1, \dots, M_n$ of finite dimensional subspaces of $E_1, \dots, E_n$ respectively whose tensor product contains $z$.

Analogously, for $\beta$ an s-tensor norm of order $n$, its dual tensor norm $\beta'$ is defined on $FIN$ by

$$  \big( \otimes^{n,s} M, \beta' \big) :\overset 1 = [\big( \otimes^{n,s} M', \beta \big)]'$$
and extended to $BAN$ as before.

%

The projective and injective associates (or hulls) of $\alpha$ will be  denoted, by extrapolation of the 2-fold case, as $\s \alpha /$ and $/ \alpha \s$ respectively. The projective associate of $\alpha$ will be the (unique) smallest projective tensor norm greater than $\alpha$. Following \cite[Theorem 20.6.]{DefFlo93} we have:
$$ \big( \otimes_{i=1}^n \ell_1(E_i),  \alpha  \big) \overset 1 \twoheadrightarrow \big( \otimes_{i=1}^n  E_i,   \s \alpha /  \big).$$

The injective associate of $\alpha$ will be the (unique) greatest injective tensor norm smaller than $\alpha$.  As in \cite[Theorem 20.7.]{DefFlo93} we get,
$$ \big( \otimes_{i=1}^n E_i , / \alpha \s \big) \overset 1 \hookrightarrow  \big( \otimes_{i=1}^n \ell_{\infty}(B_{E_i'}),  \alpha  \big).$$

With this, an $n$-linear form $A$ belongs to $\big( \otimes_{i=1}^n  E_i,   \s \alpha /  \big)'$ if and only if $A \circ ( Q_{E_1}, \dots, Q_{E_n}) \in \big( \otimes_{i=1}^n  \ell_1(B_{E_i}),   \alpha   \big)'$ where $Q_{E_k} : \ell_1(B_{E_k}) \twoheadrightarrow E_k$ stands for the canonical quotient map.
Moreover, $$\|A\|_{\big( \otimes_{i=1}^n  E_i,   \s \alpha /  \big)'}= \|A \circ ( Q_{E_1}, \dots, Q_{E_n}) \|_{\big( \otimes_{i=1}^n  E_i,   \alpha  \big)'}.$$

On the other hand, an $n$-linear form $A$ is  $\big( \otimes_{i=1}^n  E_i,  /  \alpha \s  \big)'$ if it has an extension to $\ell_{\infty}(B_{E_1'})\times\dots\times \ell_{\infty}(B_{E_n'})$ that is $ \big( \otimes_{i=1}^n  \ell_{\infty}(B_{E_i'}),   \alpha   \big)'$. Moreover, the norm of $A$ in $\big( \otimes_{i=1}^n  E_i,   / \alpha \s  \big)'$ is the infimum of the norms in $\big( \otimes_{i=1}^n  \ell_{\infty}(B_{E_i'}),    \alpha   \big)'$ of all such extensions.

We will say that a tensor norm $\alpha$ is injective if $\alpha =
/ \alpha \s$. Equivalently, $\alpha$ is projective whenever $\alpha = \s \alpha /$.
%
%

Note that in our notation, the symbols ``$\s$'' and ``$/$'' by themselves lose their original meanings, as well as the left and right sides of $\alpha$.

The projective and injective associates for a s-tensor norm $\beta$ can be defined in a similar way:
$$ \big( \otimes^{n,s} \ell_1(E),  \beta  \big) \overset 1 \twoheadrightarrow \big( \otimes^{n,s}  E,   \s \beta /  \big).$$
$$ \big( \otimes_{i=1}^n E, / \beta \s \big) \overset 1 \hookrightarrow  \big( \otimes^{n,s} \ell_{\infty}(B_{E'}),  \beta \big).$$
The s-tensor norm $\beta$ will be called injective or projective if $\beta = / \beta \s$ or $\beta = \s \beta /$ respectively.


The description of the $n$-homogeneous polynomial $Q$ belonging to $\big( \otimes^{n,s}  E,   \s \beta /  \big)'$ or to $\big(\otimes^{n,s}~E,~/\beta\s\big)'$ is analogous to that for multilinear forms.

It is not hard to check, following the ideas of \cite[Proposition 20.10.]{DefFlo93}, the following duality relations for a full tensor norms $\alpha$ or an s-tensor norm $\beta$:
$$ (/ \alpha \s)' = \s \alpha ' /, \; \; \; (\s \alpha /)' = /\alpha '\s, \; \; \;   (/ \beta \s)' = \s \beta' /, \; \; \;   (\s \beta /)' = / \beta' \s.$$

If $\mathcal{U}$ is the maximal ideal of $n$-linear forms associated to the tensor norm $\alpha$ we will denote $\s \mathcal{U} /$ the maximal ideal associated to $\s \alpha /$ (i.e. $\s \mathcal{U} / (E_1, \dots, E_n)= \big( \otimes_{i=1}^n  E_i,   / \alpha \s  \big)'$).
Analogously, if $ \mathcal{Q}$ the maximal ideal of $n$-homogeneous polynomials associated to the s-tensor norm $\beta$ we will denote $\s \mathcal{Q} /$ the maximal ideal associated to $\s \beta /$.

\section{The symmetric Radon-Nikod\'{y}m property} \label{Seccion-SRN}

It is well know that the Radon-Nikodým property permitted to
understand the full duality of a tensor norm $\pi$ and
$\varepsilon$, describing conditions under which
$E'\widetilde{\otimes}_\pi
F'=(E\widetilde{\otimes}_{\varepsilon}F)'$ holds. Lewis in \cite{Lewis}
obtained many results of the form $E'\widetilde{\otimes}_\alpha
F'=(E\widetilde{\otimes}_{\alpha'}F)'$ or, in other words,
results about $\U^{min}(E,F')=\U(E,F')$ (if $\U$ is the maximal
operator ideal associated with $\alpha$).

For $\mathcal{Q}$ a maximal ideal of $n$-homogeneous polynomials, we want to find conditions under which the next equality holds:
\begin{equation}\label{igualdad}
\mathcal{Q}^{min}(E) = \mathcal{Q}(E).
\end{equation}

A related question is the following: If $\beta$ is the s-tensor norm of order $n$ associated to $\mathcal{Q}$, when does the natural map
\begin{equation}\label{aplicacionJ}
J: \widetilde{\otimes}^{n,s}_{\beta}E' \overset{1}{\twoheadrightarrow} \mathcal{Q}^{min}(E) \hookrightarrow \mathcal{Q}(E)\overset{1}{=} \big( \widetilde{\otimes}^{n,s}_{\beta'}E\big)',
\end{equation}
become a metric surjection? Note that, in this case, we get the equality (\ref{igualdad}).
To give an answer to this question we will need the next definition. In a sense, is symmetric version of the one which appears in \cite[33.2]{DefFlo93}.

\begin{definition}
A finitely generated s-tensor norm $\beta$ of order $n$ has the symmetric-Radon-Nikodým property (sRN property) if
$$  \widetilde{\otimes}_{\beta}^{n,s} \ell_1 \overset{1}{=} \Big(\widetilde{\otimes}_{\beta'}^{n,s} c_0 \Big)'.$$ Here equality means that canonical arrow $J: \widetilde{\otimes}_{\beta}^{n,s} \ell_1 \longrightarrow \Big(\widetilde{\otimes}_{\beta'}^{n,s} c_0 \Big)'$ (as in (\ref{aplicacionJ}) with $E=c_0$) is an isometric isomorphism.
\end{definition}

It is clear from the definition that a s-tensor norm $\beta$ has de sRN property if and only if its projective hull $\s \beta /$ does.

Since $\ell_1$ has the metric approximation property, by \cite[Corollary 5.2 and Proposition 7.5]{Flo01} we have that $J$ is always an isometry. Therefore, to prove that $\beta$ has the sRN property we only have to check that $J$ is surjective. Note that, for $\mathcal{Q}$ its the maximal $n$-homogeneous polynomial ideal associated to $\beta$, our previous definition is equivalent to
\begin{equation}
\label{sRNpolinomial}\mathcal{Q}^{min}(c_0) = \mathcal{Q}(c_0),
\end{equation}
and the isometry is automatic.
\bigskip

Our interest in the sRN property is the following Lewis-type
theorem:

\begin{theorem}\label{Lewis theorem for polynomials}
Let $\beta$ be an s-tensor norm with the sRN property and $E$ be an Asplund space.
Then
$$ \widetilde{\otimes}_{\s \beta /}^{n,s} E'  \twoheadrightarrow \Big(\widetilde{\otimes}_{/ \beta' \s}^{n,s} E \Big)'$$ is a metric surjection.

As a consequence, if $\Q$ is the maximal ideal of n-homogeneous polynomials associated with $\beta$, then
$$(\s\mathcal{ Q }/)^{min}(E) = \s\mathcal{Q}/(E) \; \mbox{isometrically}.$$
\end{theorem}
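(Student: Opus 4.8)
The plan is to reduce the assertion to the metric surjectivity of a single canonical arrow and then to an approximation argument powered by the Radon--Nikod\'ym property of $E'$; I would organize the whole thing by first proving the full-tensor-norm version, Theorem~\ref{Lewis theorem}, and transferring it to the symmetric setting.

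Two preliminary reductions. First, the two conclusions are equivalent: $(\s\mathcal{Q}/)^{min}(E)$ is by definition the range of the map $J$ of (\ref{aplicacionJ}) associated to $\s\beta/$, and that range always sits isometrically inside $\s\mathcal{Q}/(E)=\big(\widetilde{\otimes}^{n,s}_{/\beta'\s}E\big)'$ (recall $(\s\beta/)'=/\beta'\s$), so ``$J$ is a metric surjection'' is exactly ``$(\s\mathcal{Q}/)^{min}(E)=\s\mathcal{Q}/(E)$ isometrically''. Second, since $\beta$ has the sRN property if and only if $\s\beta/$ does, I may assume $\beta$ projective, i.e. $\beta=\s\beta/$ and $\beta'=/\beta'\s$; the statement then reads: \emph{for $\beta$ projective with the sRN property and $E$ Asplund, the canonical map $\widetilde{\otimes}^{n,s}_\beta E'\twoheadrightarrow\big(\widetilde{\otimes}^{n,s}_{\beta'}E\big)'$ is a metric surjection.}

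Now I would factor this map. By the $\ell_1$-description of the projective hull there is a metric surjection $\widetilde{\otimes}^{n,s}_\beta\ell_1(B_{E'})\overset{1}{\twoheadrightarrow}\widetilde{\otimes}^{n,s}_\beta E'$, and by the $\ell_\infty$-description of the injective hull the isometric embedding $\widetilde{\otimes}^{n,s}_{\beta'}E\hookrightarrow\widetilde{\otimes}^{n,s}_{\beta'}\ell_\infty(B_{E'})$ dualizes to a metric surjection $\big(\widetilde{\otimes}^{n,s}_{\beta'}\ell_\infty(B_{E'})\big)'\twoheadrightarrow\big(\widetilde{\otimes}^{n,s}_{\beta'}E\big)'$. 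So it suffices to show: every $P\in\big(\widetilde{\otimes}^{n,s}_{\beta'}E\big)'=\mathcal{Q}(E)$ with $\|P\|<1$ admits an extension $\widetilde P\in\big(\widetilde{\otimes}^{n,s}_{\beta'}\ell_\infty(B_{E'})\big)'=\mathcal{Q}(\ell_\infty(B_{E'}))$ with $\|\widetilde P\|<1$ which can be ``discretized'' into an element of $\widetilde{\otimes}^{n,s}_\beta\ell_1(B_{E'})$, still of norm $<1$ and still inducing $P$ on $E$. Here the hypotheses enter. I would reduce to $E$ \emph{separable} (the $\s\mathcal{Q}/$-norm is finite-dimensionally determined since $\s\mathcal{Q}/$ is maximal, and both membership in the minimal hull and the approximating norm are separably determined), so that $E'$ is norm-separable and, being a separable dual, has the Radon--Nikod\'ym property; passing to a countable norm-dense subset of $B_{E'}$ lets one work with $\ell_1=\ell_1(\mathbb{N})$, $c_0$ and $\ell_\infty=\ell_\infty(\mathbb{N})$. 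The sRN property gives $\widetilde{\otimes}^{n,s}_\beta\ell_1\overset{1}{=}\big(\widetilde{\otimes}^{n,s}_{\beta'}c_0\big)'$, and the RNP of $E'$ is exactly what upgrades an extension $\widetilde P$ living on $\ell_\infty$ to a genuine $\ell_1$-representation: partitioning the norm-separable $B_{E'}$ into measurable sets of small $\|\cdot\|$-diameter and running a Radon--Nikod\'ym / martingale-type argument as in the proof of the Lewis theorem \cite[Section~33.3]{DefFlo93}, the discretized object lies in the range of $\widetilde{\otimes}^{n,s}_\beta\ell_1(B_{E'})\to\widetilde{\otimes}^{n,s}_\beta E'$, approximates $P$ in the $\s\mathcal{Q}/$-norm, and does not push the norm past $1$. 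This is the preimage we need.

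To keep the $n$-linear bookkeeping manageable I would first prove the analogue for full tensor norms, Theorem~\ref{Lewis theorem}, where the Lewis-type factorization is carried out for $n$-linear forms on $\ell_1(B_{E_1'})\times\cdots\times\ell_1(B_{E_n'})$, and then deduce the symmetric case using the norm-one symmetrization projection of the full $n$-fold tensor product onto $\widetilde{\otimes}^{n,s}$: one checks that this projection intertwines all the canonical arrows above and carries the sRN property of $\beta$ to the full Radon--Nikod\'ym property of an associated full tensor norm. The main obstacle is precisely this combination. Because the sRN property is genuinely weaker than the Radon--Nikod\'ym property of a tensor norm in the sense of \cite[33.2]{DefFlo93}, and because in the polynomial/multilinear framework we cannot fall back on factorizations of \emph{linear} operators, the Radon--Nikod\'ym discretization has to be performed directly at the $n$-linear level using only the ``diagonal'' information encoded by $\beta$ — this is what forces the detour through full tensor norms and makes the proof substantially heavier than the classical operator-ideal argument of \cite[Section~33.3]{DefFlo93}. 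A secondary, purely technical, point is justifying the separable reduction for the minimal hull.
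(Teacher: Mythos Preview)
Your overall architecture---prove a full-tensor-norm version first, exploit that $E'$ has the Radon--Nikod\'ym property when $E$ is Asplund, then pass to the symmetric case---matches the paper's. But the proposal is missing the concrete mechanism that makes the argument work, and two of your planned steps diverge from the paper in ways that either fail or require nontrivial extra work you do not supply.

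The engine of the proof is the \emph{Lewis--Stegall theorem}: since $E'$ has the RNP, the adjoint of the canonical embedding $I:E\hookrightarrow\ell_\infty(B_{E'})$ factors as $I'=Q\circ A$ through $\ell_1(B_{E'})$ with $\|A\|\le 1+\varepsilon$. Given $P\in\big(\widetilde{\otimes}^{n,s}_{/\beta'\backslash}E\big)'$, take a Hahn--Banach extension $\overline P$ to $\ell_\infty(B_{E'})$, take its Aron--Berner extension $AB(\overline P)$ to $\ell_\infty(B_{E'})''$, and then \emph{precompose with $A'J_{c_0}$} to obtain $L\in\big(\widetilde{\otimes}^{n,s}_{/\beta'\backslash}c_0(B_{E'})\big)'$ with $\|L\|\le(1+\varepsilon)^n\|P\|$ and $\Psi(L)=P$. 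This is the ``discretization'' step; it is a clean linear factorization, not a martingale argument, and it is what your proposal lacks. Two further points. First, the paper does \emph{not} reduce to separable $E$; instead it proves (Proposition~3.2) that the sRN identity $\big(\widetilde{\otimes}^n\ell_1(J),\alpha\big)=\big(\widetilde{\otimes}^nc_0(J),\alpha'\big)'$ holds for \emph{arbitrary} index sets $J$, via a Littlewood--Bogdanowicz--Pe{\l}czy\'nski countability argument. Your separable reduction is asserted but not justified (separable determination of the minimal hull is not obvious), and ``passing to a countable dense subset of $B_{E'}$'' does not by itself identify $\ell_1(B_{E'})$ with $\ell_1$. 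Second, the paper does \emph{not} pass from full to symmetric by a symmetrization projection; it reruns the argument in the symmetric category, using that the Aron--Berner extension is an isometry for maximal polynomial ideals (Proposition~4.1). Your plan to ``carry the sRN property of $\beta$ to the full Radon--Nikod\'ym property of an associated full tensor norm'' is neither proved nor plausible as stated---the sRN property is strictly weaker, as you yourself note---and the symmetrization projection does not obviously intertwine the $\Psi$-maps built from $EXT_k$ and $AB$.
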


One may wonder if the projective hull of the tensor norm $\beta$ is really necessary in Theorem~\ref{Lewis theorem for polynomials}. Let us see that, in general, it cannot be avoided.
Take any injective s-tensor norm and let $\mathcal{Q}$ be the associated maximal polynomial ideal. If $T$ is dual of the original Tsirelson space (which is reflexive and therefore Asplund), then we can see that $\mathcal Q(T) \neq \mathcal Q^{min}(T)$. Indeed, if we write each $x\in T$ in terms of the canonical basis $x=\sum_j x_j e_j$, we can define a sequence of $n$-homogeneous polynomials  $P_m$ on $T$ as $P_m(x)=\sum_{j=1}^m x_j^n$. Since $\beta$ is injective, by \cite[Lemma 3.7.]{CarGal09}, we have that $\| P_m\|_{Q(\ell_2) }$ is uniformly bounded. Since $T$ does not contain $(\ell_2^m)_{m}$ nor $(\ell_\infty^m)_{m}$ uniformly complemented (see
\cite[pages 33 and 66]{Casazza-Shura(Tsirelson)}), we can conclude that $\mathcal Q(T)$ cannot be separable by  \cite[Proposition 4.9]{CarGal09}. As a consequence, $\mathcal Q(T)$ cannot coincide with $\mathcal Q^{min}(T)$.

In order to prove Theorem~\ref{Lewis theorem for polynomials}, an analogous result for full tensor products (and multilinear forms) will be necessary. As a consequence, we postpone the proof of Theorem~\ref{Lewis theorem for polynomials} to Section~\ref{proof of big heorem}.

\smallskip
Let us then present different  tensor norms with the sRN. We begin with two simple examples. The following identities are simple and well known:  $$ \Big(\widetilde{\otimes}_{\pi_s'}^{n,s} c_0 \Big)'= \Big(\widetilde{\otimes}_{\varepsilon_s}^{n,s} c_0 \Big)'= \widetilde{\otimes}_{\pi_s}^{n,s} \ell_1 $$ and
$$ \Big(\widetilde{\otimes}_{\varepsilon_s'}^{n,s} c_0 \Big)'= \Big(\widetilde{\otimes}_{\pi_s}^{n,s} c_0 \Big)'= \widetilde{\otimes}_{\varepsilon_s}^{n,s} \ell_1 $$ (they easily follow from the analogous identities for full tensor products, since the symmetrization operator is a continuous projection). Therefore, we have:

\begin{example}
The tensor norms $\pi_s$ and $\varepsilon_s $ have the sRN property.
\end{example}

In \cite{Alencar85,Alencar85(reflexivity)}, Alencar showed that if $E$ is Asplund, then integral and nuclear polynomials on $E$ coincide, with equivalent norms. Later, Boyd and Ryan \cite{BoydRyan01} and, independently, Carando and Dimant \cite{CarDim00}, showed that this coincidence is isometric (with a slightly more general assumption: that $\widetilde{\otimes}_{\varepsilon_s}^{n,s} E$ does not contain a copy of $\ell_1$). Note that the isometry between nuclear and integral polynomials on Asplund spaces is an immediate consequence of Theorem~\ref{Lewis theorem for polynomials} for $\beta=\pi_s$:

\begin{corollary}
If $E$ is Asplund, then $\P_I(E)=\P_N(E)$ isometrically.
\end{corollary}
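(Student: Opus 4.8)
The plan is to deduce the corollary directly from Theorem~\ref{Lewis theorem for polynomials} applied to the projective s-tensor norm $\beta = \pi_s$. First I would recall the identifications that package integral and nuclear polynomials as symmetric tensor products: the space of integral $n$-homogeneous polynomials $\mathcal{P}_I(E)$ is isometrically $\big(\widetilde{\otimes}_{\varepsilon_s}^{n,s} E\big)'$, and the space of nuclear $n$-homogeneous polynomials $\mathcal{P}_N(E)$ is isometrically a quotient of (in fact, when $E'$ has the approximation property, isometrically equal to) $\widetilde{\otimes}_{\pi_s}^{n,s} E'$; more to the point, $\mathcal{P}_N = \mathcal{Q}^{min}$ where $\mathcal{Q} = \mathcal{P}_I$ is the maximal ideal associated with $\beta = \pi_s$, and $\beta' = \varepsilon_s$.

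Next I would check the two hypotheses of the theorem in this case. The tensor norm $\pi_s$ is projective, so $\s \pi_s / = \pi_s$ and $/ \pi_s' \s = / \varepsilon_s \s = \varepsilon_s$ (since $\varepsilon_s$ is injective); thus the canonical map in the theorem becomes precisely $\widetilde{\otimes}_{\pi_s}^{n,s} E' \twoheadrightarrow \big(\widetilde{\otimes}_{\varepsilon_s}^{n,s} E\big)'$, i.e. $\mathcal{P}_N(E) \to \mathcal{P}_I(E)$. By the Example above, $\pi_s$ has the sRN property. Since $E$ is assumed Asplund, Theorem~\ref{Lewis theorem for polynomials} applies and gives that this map is a metric surjection, and that $(\s \mathcal{Q}/)^{min}(E) = \s\mathcal{Q}/(E)$ isometrically; but $\s\mathcal{Q}/ = \mathcal{Q} = \mathcal{P}_I$ and $(\mathcal{P}_I)^{min} = \mathcal{P}_N$, so $\mathcal{P}_N(E) = \mathcal{P}_I(E)$ isometrically.

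There is essentially no obstacle here — the corollary is a pure specialization. The only point requiring a word of care is the routine bookkeeping identifying $\mathcal{Q}^{min}$ with $\mathcal{P}_N$ and $\mathcal{Q}$ with $\mathcal{P}_I$ for $\beta = \pi_s$, together with the fact that a metric surjection $\widetilde{\otimes}_{\pi_s}^{n,s} E' \twoheadrightarrow \big(\widetilde{\otimes}_{\varepsilon_s}^{n,s} E\big)'$ whose domain is the completed symmetric projective tensor product yields an isometric identification of the nuclear polynomials (the minimal hull) with the integral polynomials; this is exactly the ``as a consequence'' clause of the theorem, so no extra argument is needed. I would therefore keep the proof to two or three lines, citing the relevant identifications from \cite{Flo01,Flo02(On-ideals)} and invoking Theorem~\ref{Lewis theorem for polynomials}.
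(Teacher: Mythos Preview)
Your proposal is correct and follows exactly the paper's approach: the paper simply states that the corollary is an immediate consequence of Theorem~\ref{Lewis theorem for polynomials} for $\beta=\pi_s$, and your write-up just spells out the routine identifications ($\pi_s$ projective, $\pi_s'=\varepsilon_s$ injective, $\pi_s$ has the sRN property, $\mathcal{P}_I$ is the associated maximal ideal and $\mathcal{P}_N$ its minimal hull) that make this specialization transparent.
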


If we apply Theorem~\ref{Lewis theorem for polynomials} and \cite[Corollary 5.2]{Flo01} to $\beta=\varepsilon_s$, we obtain $$ \P_e(E)=(\P_e)^{min}(E)= \widetilde{\otimes}_{\s \varepsilon_s /}^{n,s} E' \; \mbox{isometrically}$$ for $E'$ with the bounded approximation property. Combining this with the main result in \cite{GreRy05} we have:

\begin{corollary}
Let $E$ be a Banach space such that $E'$ has a basis. Then, the monomials associated to this basis is a Schauder basis of $\P_e(E)$.
\end{corollary}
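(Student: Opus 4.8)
The plan is to read the corollary off the isometric identification of $\P_e(E)$ with a symmetric tensor product established in the paragraph preceding the statement, and then transport to $\P_e(E)$ the monomial Schauder basis of that tensor product provided by \cite{GreRy05}.

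\smallskip

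First I would check that the hypothesis already places us in the setting of that paragraph. Since $E'$ carries a Schauder basis it is separable, so every subspace of $E$ has separable dual and hence $E$ is an Asplund space; moreover $E'$ has the bounded approximation property, with constant equal to the basis constant of $(\varphi_j)_j$. Thus the case $\beta=\varepsilon_s$ of Theorem~\ref{Lewis theorem for polynomials}, combined with \cite[Corollary~5.2]{Flo01}, applies and gives
\[
\widetilde{\otimes}_{\s\varepsilon_s/}^{n,s}E'\ \overset{1}{=}\ (\P_e)^{min}(E)\ =\ \P_e(E),
\]
where the first identification is the canonical arrow $\Psi$, sending an elementary symmetric tensor $\varphi_1\cdots\varphi_n$ (with $\varphi_i\in E'$) to the monomial polynomial $x\mapsto\varphi_1(x)\cdots\varphi_n(x)$.

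\smallskip

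Next I would invoke the main result of \cite{GreRy05}: if $(\varphi_j)_{j\in\mathbb{N}}$ is a Schauder basis of a Banach space $F$, then the monomials $\varphi_{j_1}\cdots\varphi_{j_n}$ with $j_1\le\cdots\le j_n$, listed in the ``square'' order (so that the initial segments exhaust the subspaces $\otimes^{n,s}\mathrm{span}\{\varphi_1,\dots,\varphi_m\}$, $m\in\mathbb{N}$), form a Schauder basis of $\widetilde{\otimes}_{\s\varepsilon_s/}^{n,s}F$. Applying this with $F=E'$ and the given basis $(\varphi_j)_j$ of $E'$, and pushing the resulting basis forward through the isomorphism $\Psi$, one obtains a Schauder basis of $\P_e(E)$; by the description of $\Psi$ above its members are precisely the monomials $x\mapsto\varphi_{j_1}(x)\cdots\varphi_{j_n}(x)$ determined by the basis of $E'$, listed in the square order. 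This is the assertion of the corollary.

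\smallskip

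All the substance is imported here --- Theorem~\ref{Lewis theorem for polynomials}, Floret's identification of a minimal hull with a symmetric tensor product under the bounded approximation property, and the Grecu--Ryan basis theorem --- so the only thing to verify is the bookkeeping matching the abstract monomial basis of $\widetilde{\otimes}_{\s\varepsilon_s/}^{n,s}E'$ with the polynomial monomials under $\Psi$, which is immediate from the definition of the canonical map; hence I do not expect a genuine obstacle. If one wishes to make the Grecu--Ryan input explicit for the particular norm $\s\varepsilon_s/$, it suffices to observe that for the square order the projections onto the initial segments $\otimes^{n,s}\mathrm{span}\{\varphi_1,\dots,\varphi_m\}$ are the maps $\otimes^{n,s}P_m$ induced by the basis projections $P_m$ of $E'$, which property~(2) in the definition of an s-tensor norm bounds uniformly by $(\sup_m\|P_m\|)^{\,n}$; a standard refinement inside each block then yields uniform bounds for all partial-sum projections, so that the monomials form a Schauder basis of $\widetilde{\otimes}_{\beta}^{n,s}E'$ for every finitely generated s-tensor norm $\beta$.
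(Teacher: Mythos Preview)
Your proposal is correct and follows essentially the same route as the paper: the paper's entire argument is the sentence preceding the corollary, namely the identification $\P_e(E)=\widetilde{\otimes}_{\s\varepsilon_s/}^{n,s}E'$ obtained from Theorem~\ref{Lewis theorem for polynomials} (with $\beta=\varepsilon_s$) together with \cite[Corollary~5.2]{Flo01}, and then the appeal to \cite{GreRy05}. Your write-up simply makes explicit the two hypotheses needed (Asplund, bounded approximation property of $E'$) and the bookkeeping of the canonical map, which the paper leaves implicit.
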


\bigskip

We now give other examples of s-tensor norms associated to well know maximal polynomial ideals having the sRN property.
\medskip

\textbf{The ideal of  $r$-factorable polynomials:}
For $n \leq r \leq \infty$, a polynomial $P \in \mathcal{P}^n(E)$ is called \textbf{$r$-factorable} \cite{Flo02(On-ideals)} if there is a positive measure space $(\Omega, \mu)$, an operator $T \in \mathcal{L}\big(E, L_r(\mu)\big)$ and $Q \in \mathcal{P}^n \big( L_r(\mu) \big)$ with $P = Q \circ T$. The space of all such polynomials will be denoted by $\mathcal{L}_r^n(E)$.
With
$$ \|P\|_{\mathcal{L}_r^n(E)} = \inf \{ \|Q\| \|T\|^n \; : \; P: E \overset T \longrightarrow L_r(\mu) \overset Q \longrightarrow \mathbb{K}\}.$$

\begin{example}\label{ejemplo-factorables}
Let $\rho_n^r$ be the s-tensor norm associated to $\mathcal{L}_r^n$ ($r \geq n \geq 2$). Then, $\rho_n^r$ has the s-RN property.
\end{example}


\begin{proof}
We can suppose that $r < \infty$ since $\mathcal{L}_\infty^n(E)=\mathcal{P}_e^n(c_0)$ \cite[Proposition 3.4]{Flo02(On-ideals)}.
For $P \in \mathcal{L}_r^n(c_0)$ there is a measure space $(\Omega, \mu)$, an operator $T \in \mathcal{L}\big(c_0, L_r(\mu)\big)$ and a polynomial $Q \in \mathcal{P}^n \big( L_r(\mu) \big)$ with $P = Q \circ T$.
Since $L_r(\mu)$ is reflexive, as a direct consequence of the Schauder Theorem  and the Schur property of $\ell_1$ we have that  $T$ is approximable. On the other hand $Q$ is trivially in ${\mathcal{L}_r^n(L_r(\mu))}$. Hence $P$ belongs to $(\mathcal{L}_r^n)^{min}(c_0)$.
\end{proof}

\textbf{The ideal of  positively $r$-factorable polynomials:}
An $n$-homogeneous polynomial $Q: F \to \mathbb{K}$ on a Banach lattice $F$ is called positive, if $\check{Q}:F \to \mathbb{K}$ is positive, i.e., $\check{Q}(f_1, \dots, f_n)\geq 0$ for $f_1, \dots, f_n \geq 0$. For $n \leq r \leq \infty$, a polynomial $P \in \mathcal{P}^n(E)$ is called \textbf{postively $r$-factorable} \cite{Flo02(On-ideals)} if there is a positive measure space $(\Omega, \mu)$, an operator $T \in \mathcal{L}\big(E, L_r(\mu)\big)$ and $Q \in \mathcal{P}^n \big( L_r(\mu) \big)$ positive with $P = Q \circ T$. The space of all such polynomials will be denoted by $\mathcal{J}_r^n(E)$.
With
$$ \|P\|_{\mathcal{L}_r^n(E)} = \inf \{ \|Q\| \|T\|^n \; : \; P: E \overset T \longrightarrow L_r(\mu) \overset Q \longrightarrow \mathbb{K}\}.$$

Using the the ideas of the previous proof we have:

\begin{example}\label{ejemplo-positively-factorables}
Let $\delta_n^r$ be the s-tensor norm associated to $\mathcal{J}_r^n$ ($2 \leq n \leq r < \infty$). Then, $\delta_n^r$ has the s-RN property.
\end{example}

\textbf{The ideal of $r$-dominated  polynomials:}

For $x_1,\dots,x_m\in E$, we define
$$
w_{r} \big((x_{i})_{i=1}^m\big) = \sup_{x' \in B_{E'}} \left(
\sum_{i} |<x', x_i>|^{r} \right)^{1/r}.
$$
A polynomial $P \in P^n(E)$ is
\textbf{$r$-dominated} (for $r \geq n$) if there exists $C>0$ such that for every
finite sequence $(x^{i})_{i=1}^{m} \subset E$ the following holds
\[
\left( \sum_{i=1}^{m} |P (x_i)|^{\frac{r}{n}}
\right)^{\frac{n}{r}} \leq C w_{r}((x_i)_{i=1}^{m})^n.
\]
We will denote the space of all such polynomials by  $\mathcal{D}_{r}^n(E)$.
The least of such constants $C$ is called the
\emph{$r$-dominated norm} and denoted $\|P\|_{\mathcal{D}_r ^n(E)}$.

In \cite[Section 4]{CarDimSev07}, a $n$-fold full tensor norm $\alpha_{r'}^n$ was introduced, so that the ideal of dominated multilinear forms is dual to $\alpha_{r'}^n$. If we use the same notation for the analogous s-tensor norm, we have that $(\alpha_{r'}^n)'$ is the s-tensor norm associated to $\mathcal{D}_{r}$.

\begin{example}\label{ejemplo-dominated}
The s-tensor norm  $({\alpha_{r'}^n})'$ has the s-RN property.
\end{example}
\begin{proof}
By \cite{Sch91} we know that $\mathcal{D}_r^n  = \mathcal{P}^n \circ \Pi_r$, where $\Pi_r$ is the ideal of $r$-summing operators (see \cite{Flo01} for notation). Thus, for $P \in \mathcal{D}_r^n(c_0)$ we have the factorization $P = Q \circ T$ where $T: c_0 \longrightarrow G$ is an $r$-summing operator and $Q : G \longrightarrow \mathbb{K}$ an $n$-homogeneous continuous polynomial. We may assume without lost of generality that $G=F'$ for a Banach space $F$ (think on the Aron-Berner extension).
By \cite[Proposition 33.5]{DefFlo93} the tensor norm $(\alpha_{r',1})'$ has the Radon Nikodým property. Using this, and the identity $(\alpha^t)'=(\alpha')^t$ (which holds for every tensor norm of order two $\alpha$) we easily get:
$$
\begin{aligned}
\Pi_r(c_0,G) &= \Pi_r(c_0,F')=\big(c_0 \otimes_{\alpha_{1,r'}} F \big)'= \big(F \otimes_{\alpha_{r',1}} c_0 \big)' = \\
& = F' \otimes_{(\alpha_{r',1})'} \ell_1 = \ell_1 \otimes_{(\alpha_{1,r'})'}F'=\ell_1 \otimes_{(\alpha_{1,r'})'}G.
\end{aligned}
$$
Therefore, we have proved that $\Pi_r(c_0,G)=(\Pi_r)^{min}(c_0,G)$. Now is easy that $\mathcal{D}_r^n (c_0)=(\mathcal{D}_r^n)^{min}(c_0)$.
 \end{proof}

\bigskip

A natural and important question about a tensor norm is if it preserves some Banach space property. The following result shows that the symmetric Radon-Nikod\'{y}m is closely related to the preservation of the Asplund property under tensor products:

\begin{theorem} \label{RN para tensores sim}
Let $E$ be Banach space and $\beta$ a projective s-tensor norm with sRN property. The tensor product $\widetilde{\otimes}^{n,s}_{\beta'}E$ is Asplund if an only if $E$ is Asplund.
\end{theorem}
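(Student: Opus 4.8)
The plan is to prove the two implications of Theorem~\ref{RN para tensores sim} separately, using Theorem~\ref{Lewis theorem for polynomials} for the forward direction and a general "subspace/quotient" argument for the converse.

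For the direction "$E$ Asplund $\Rightarrow \widetilde{\otimes}^{n,s}_{\beta'}E$ Asplund", I would recall the standard fact that a Banach space $X$ is Asplund if and only if $X'$ has the Radon-Nikod\'{y}m property, equivalently (by Stegall) if and only if every separable subspace of $X$ has separable dual. So it suffices to show that $\big(\widetilde{\otimes}^{n,s}_{\beta'}E\big)'$ has the RN property. Since $\beta$ is projective with the sRN property, Theorem~\ref{Lewis theorem for polynomials} gives the metric surjection $\widetilde{\otimes}^{n,s}_{\beta}E' \twoheadrightarrow \big(\widetilde{\otimes}^{n,s}_{/\beta'\s}E\big)'$; and because $\beta$ is projective and $E$ Asplund, one expects $/\beta'\s$ to agree with $\beta'$ on $E$ up to the relevant identifications, so in fact $\big(\widetilde{\otimes}^{n,s}_{\beta'}E\big)' = \s\mathcal{Q}/(E)$ is a quotient of $\widetilde{\otimes}^{n,s}_{\beta}E'$. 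Then I would invoke that the RN property passes to quotients, together with the fact that $\widetilde{\otimes}^{n,s}_{\beta}E'$ itself has the RN property when $E'$ does: this last point needs $E$ Asplund (so $E'$ has RN) plus a stability result for the projective-type symmetric tensor product — here I would cite the relevant result of Florencio--Hern\'andez or the argument via $\widetilde{\otimes}^{n,s}_{\beta}E'$ being a norm-one complemented subspace (via the symmetrization projection) of a full tensor product of copies of $E'$, and the known RN-stability of reasonable full tensor products under the Asplund/RN hypothesis.

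For the converse "$\widetilde{\otimes}^{n,s}_{\beta'}E$ Asplund $\Rightarrow E$ Asplund", I would use that $\varepsilon_s \leq \beta'$, so the identity induces a norm-one injection, and more usefully that $E$ embeds isometrically into $\widetilde{\otimes}^{n,s}_{\beta'}E$ as a \emph{complemented} subspace — for instance via $x \mapsto x^{\otimes n}$ composed with a fixed bounded linear retraction, or more cleanly: fix $\varphi \in E'$ with $\|\varphi\|=1$ and consider the map $x \mapsto \varphi(x)^{n-1} x$... Actually the standard trick is that the $n$-fold symmetric tensor product contains a $1$-complemented copy of $E$: pick $\varphi\in S_{E'}$ and $x_0\in S_E$ with $\varphi(x_0)=1$, define $S:E\to\widetilde{\otimes}^{n,s}_{\beta'}E$ and a projection $R$ back onto a copy of $E$ using $\varphi$ in $n-1$ slots. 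Since Asplundness is inherited by (complemented, indeed by all closed) subspaces, $E$ is Asplund. The main obstacle I anticipate is the forward direction: one must be careful that the hypotheses genuinely yield $\big(\widetilde{\otimes}^{n,s}_{\beta'}E\big)'$ as a \emph{quotient} of a space with the RN property — this requires correctly matching $\beta'$ with $/\beta'\s$ on the Asplund space $E$ (using that $\beta$ projective $\Leftrightarrow$ $\beta'$ injective, so $/\beta'\s = \beta'$ on $FIN$ and hence, by finite generation, everywhere), and then correctly establishing the RN property of the big projective symmetric tensor product $\widetilde{\otimes}^{n,s}_{\beta}E'$ itself. The latter is where I would expect to need the most care, and where I would lean on the cited stability results for tensor norms rather than reprove them.
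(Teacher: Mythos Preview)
Your converse direction (``tensor product Asplund $\Rightarrow E$ Asplund'') is fine and matches the paper's ``necessity is clear'': $E$ sits as a closed subspace of $\widetilde{\otimes}^{n,s}_{\beta'}E$, and Asplundness passes to closed subspaces.

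The forward direction, however, has two genuine gaps.

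First, the Radon--Nikod\'{y}m property does \emph{not} pass to quotients. Every separable Banach space is a quotient of $\ell_1$, and $\ell_1$ has the RN property while $L_1[0,1]$ does not. So even if you knew that $\widetilde{\otimes}^{n,s}_{\beta}E'$ has the RN property, the metric surjection $\widetilde{\otimes}^{n,s}_{\beta}E' \twoheadrightarrow \big(\widetilde{\otimes}^{n,s}_{\beta'}E\big)'$ from Theorem~\ref{Lewis theorem for polynomials} would not by itself give you RN for the target.

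Second, the ``stability result'' you want to lean on---that $\widetilde{\otimes}^{n,s}_{\beta}E'$ has the RN property whenever $E'$ does---is false in the generality you need. The paper itself points this out immediately after Theorem~\ref{RN para tensores sim}: Bourgain and Pisier constructed a Banach space $X$ with the RN property such that $X\widetilde{\otimes}_\pi X$ contains $c_0$; hence $\widetilde{\otimes}^{n,s}_{\pi_s}X$ fails RN even though $\pi_s$ is projective with the sRN property. The positive result you have in mind (Corollary~\ref{Coro sim 1}) requires the extra hypothesis that the space be a dual with the approximation property, and its proof \emph{uses} Theorem~\ref{RN para tensores sim}, so invoking it here would be circular.

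The paper sidesteps both problems by not trying to prove RN for the whole dual. Instead it uses Stegall's characterization (Asplund $\Leftrightarrow$ every separable subspace has separable dual): given a separable $S\subset \widetilde{\otimes}^{n,s}_{\beta'}E$, one finds a separable $F\subset E$ with $S\hookrightarrow \widetilde{\otimes}^{n,s}_{\beta'}F$ (using that $\beta'$ is injective, since $\beta$ is projective). Then $F'$ is separable, so $\widetilde{\otimes}^{n,s}_{\beta}F'$ is separable, and Theorem~\ref{Lewis theorem for polynomials} gives a surjection $\widetilde{\otimes}^{n,s}_{\beta}F' \twoheadrightarrow \big(\widetilde{\otimes}^{n,s}_{\beta'}F\big)'$. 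Separability \emph{does} pass to quotients, so $\big(\widetilde{\otimes}^{n,s}_{\beta'}F\big)'$ is separable, and hence so is $S'$. The point is that separability survives quotients even though the RN property does not.
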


\begin{proof}
Necessity is clear. For the converse, let $S$ be a separable subspace of $ \widetilde{\otimes}^{n,s}_{  \beta' } E  $ and let us see that it has a separable dual. We can take $(z_k)_{k \in \mathbb{K}}$ a sequence of elementary tensors such that $S$ is contained in the closed subspace spanned by them. Each $z_k$ can be written as
$$ z_k = \sum_{j=1}^{r(k)}  \otimes^n  x_j^{k}.$$
Let $F = \overline{[ x_j^{k} : 1 \leq j \leq r(k), \; k \in \mathbb{N} ]}$. Since $\beta'$ is injective, we have the isometric inclusion $S \overset{1}\hookrightarrow \widetilde{\otimes}^{n,s}_{ \beta'} F $.
Now, $F'$ is separable (since $E$ is Asplund) and therefore, by Theorem~\ref{Lewis theorem for polynomials}, the map $$ \widetilde{\otimes}^{n,s}_{  \beta' } F'   \longrightarrow  \big( \widetilde{\otimes}^{n,s}_{  \beta' } F  \big)'$$ is surjective. So, $\big( \widetilde{\otimes}^{n,s}_{  \beta' } F  \big)'$ is a separable Banach space and hence is also $S'$ (since we have a surjective map $\big(  \widetilde{\otimes}^{n,s}_{  \beta' } F  \big)'  \twoheadrightarrow S'$).
\end{proof}

%

The following is an application of the previous theorem to $\beta = \s \varepsilon_s /$:
\begin{corollary}
For a Banach space $E$ and $n \in \mathbb{N}$, $\P_e(E)$ has the Radon-Nikodým property if and only if $E$ is Alplund.
\end{corollary}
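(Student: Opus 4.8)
The plan is to derive this corollary directly from Theorem~\ref{RN para tensores sim} by recalling how the ideal of extendible polynomials $\P_e$ fits into the tensor-norm framework of the preceding examples. The key structural fact is that $\P_e(E) = \widetilde{\otimes}_{\s \varepsilon_s /}^{n,s} E'$ is the dual-side description of the \emph{injective} s-tensor norm, so the relevant $\beta$ in Theorem~\ref{RN para tensores sim} is $\beta = \s \varepsilon_s /$, its projective hull $\s \beta / = \s \varepsilon_s /$ (which is projective, being a projective hull), and the dual $\beta' = /\varepsilon_s' \s$; one checks via the duality relations $(\s \alpha /)' = /\alpha' \s$ that $\widetilde{\otimes}_{\beta'}^{n,s} E$ is exactly $\widetilde{\otimes}_{/\pi_s' \s}^{n,s} E$, the predual whose dual realizes $\s\varepsilon_s/$. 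Since $\varepsilon_s$ has the sRN property (the Example following $\pi_s$), so does its projective hull $\s\varepsilon_s/$ — indeed the remark right after the definition of sRN says a norm has sRN iff its projective hull does.

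Next I would invoke the identification, already used in the excerpt just before this corollary, namely $\P_e(E) = (\P_e)^{min}(E) = \widetilde{\otimes}_{\s\varepsilon_s/}^{n,s} E'$ for $E'$ with the bounded approximation property, but more robustly: for \emph{every} Banach space $E$, one has the isometric identity $\P_e(E) \overset{1}{=} \big(\widetilde{\otimes}_{/\varepsilon_s'\s}^{n,s} E\big)'$ (this is just the statement that the maximal ideal associated to the injective hull $\s\varepsilon_s/$ evaluated at $E$ is the dual of the corresponding tensor product; it is the polynomial analogue of the multilinear description recalled in Section~\ref{preliminares}). Thus $\P_e(E)$ is, up to isometry, a \emph{dual} space, namely the dual of $\widetilde{\otimes}_{/\varepsilon_s'\s}^{n,s} E = \widetilde{\otimes}_{\beta'}^{n,s} E$ with $\beta = \s\varepsilon_s/$.

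The proof then reduces to a single clean equivalence: a dual Banach space $X'$ has the Radon-Nikod\'ym property if and only if $X$ is Asplund (this is the classical Asplund--RNP duality, e.g. Stegall's theorem). Applying it with $X = \widetilde{\otimes}_{\beta'}^{n,s} E$, we get that $\P_e(E) = X'$ has RNP iff $X = \widetilde{\otimes}_{\beta'}^{n,s} E$ is Asplund. By Theorem~\ref{RN para tensores sim}, applied to the projective s-tensor norm $\beta = \s\varepsilon_s/$ with the sRN property, $\widetilde{\otimes}_{\beta'}^{n,s} E$ is Asplund if and only if $E$ is Asplund. Chaining the two equivalences gives: $\P_e(E)$ has RNP $\iff$ $E$ is Asplund, which is the assertion (modulo the typo ``Alplund''/''Asplund'').

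The main obstacle I anticipate is purely bookkeeping rather than conceptual: one must be careful to match the hatted notation $\s\varepsilon_s/$, $/\varepsilon_s'\s$, $/\pi_s'\s$ correctly through the duality relations $(\s\beta/)' = /\beta'\s$ and $\varepsilon_s' = \pi_s$ on $FIN$, so that the $\beta$ fed into Theorem~\ref{RN para tensores sim} really is projective and really does have the sRN property, and so that the space whose Asplundness is being characterized is genuinely the predual of $\P_e(E)$. Once the correct $\beta = \s\varepsilon_s/$ is pinned down, everything else is a two-line citation of Stegall's duality theorem and an application of the already-proved Theorem~\ref{RN para tensores sim}; no new estimates are needed.
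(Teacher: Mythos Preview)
Your argument is correct and follows exactly the route the paper intends: the paper's entire proof is the one-line remark ``application of the previous theorem to $\beta = \s \varepsilon_s /$,'' and you have spelled out precisely what that application entails (projectivity and sRN of $\s\varepsilon_s/$, the identification $\P_e(E)\overset{1}{=}\big(\widetilde{\otimes}^{n,s}_{\beta'}E\big)'$, and the Asplund/RNP duality for dual spaces). One minor slip: you write $\widetilde{\otimes}_{/\pi_s' \s}^{n,s} E$ where you mean $\widetilde{\otimes}_{/\pi_s \s}^{n,s} E$, since $(\s\varepsilon_s/)'=/\varepsilon_s'\s=/\pi_s\s$; you use the correct identity $\varepsilon_s'=\pi_s$ a few lines later, so this is just a typo.
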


Looking at Theorem~\ref{RN para tensores sim} a natural question arises: if $\beta$ is a projective s-tensor norm with the sRN property, does $\widetilde{\otimes}_{ \beta }^{n,s} E$ have the Radon-Nikodým property whenever $E$ has the Radon-Nikodým property?  Burgain and Pisier \cite[Corollary 2.4]{BouPis83} presented a Banach space $E$ with the Radon-Nikod\'{y}m property such that $E\otimes_\pi E$ contains $c_0$ and, consequently, does not have the Radon-Nikod\'{y}m property. This construction gives a negative answer to our question since the copy of $c_0$ in $E\otimes_\pi E$ is actually contained in the symmetric tensor product of $E$ and $\pi_s$ (which has the sRN property) is equivalent to the restriction of $\pi $ to the symmetric tensor product.

However, $\widetilde{\otimes}_{ \beta }^{n,s} E$ inherits the Radon-Nikod\'{y}m property of $E$ if, in addition, $E$ is a dual space with the approximation property (this should be compared to~\cite{DieUhl76}, where an analogous result for the 2-fold projective tensor norm $\pi$ is shown):

\begin{corollary} \label{Coro sim 1}
Let $\beta$ be a projective s-tensor norm with the sRN property and $E$ a dual Banach space with the approximation property. Then,  $\widetilde{\otimes}_{ \beta }^{n,s} E$ has the Radon-Nikodým property if and only if $E$ does.
\end{corollary}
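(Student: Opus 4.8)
The plan is to reduce the Radon-Nikod\'ym property (RNP) of $\widetilde{\otimes}_\beta^{n,s}E$ to the combination of two facts: that $\widetilde{\otimes}_\beta^{n,s}E$ is a dual space (so that, for dual spaces, RNP is equivalent to the predual being Asplund), and that its predual is Asplund by Theorem~\ref{RN para tensores sim}. Write $E = F'$ with $F$ a Banach space, and assume $E$ has the approximation property; since $E$ is a dual space with the AP, it has the metric approximation property. The key structural input is that $\beta$ is projective with the sRN property, so that $\beta'$ is injective and Theorem~\ref{RN para tensores sim} applies to it. Observe that $E = F'$ has RNP if and only if $F$ is Asplund (this is classical), so the hypothesis ``$E$ has RNP'' is exactly ``$F$ is Asplund''.

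First I would identify $\widetilde{\otimes}_\beta^{n,s}E = \widetilde{\otimes}_\beta^{n,s}F'$ isometrically with the dual of $\widetilde{\otimes}_{\beta'}^{n,s}F$. This is precisely the content of Theorem~\ref{Lewis theorem for polynomials} together with the remark that, since $\beta$ is projective, $\s\beta/ = \beta$, and since $\beta'$ is injective, $/\beta'\s = \beta'$: the canonical map $\widetilde{\otimes}_\beta^{n,s}F' \to \big(\widetilde{\otimes}_{\beta'}^{n,s}F\big)'$ is a metric surjection whenever $F$ is Asplund, and it is an isometry into because $F$ has the (metric) approximation property as it is a subspace situation --- more carefully, one uses that $\widetilde{\otimes}_\beta^{n,s}F' \overset{1}{=} \mathcal Q^{min}(F)$ embeds isometrically into $\mathcal Q(F) \overset{1}{=} \big(\widetilde{\otimes}_{\beta'}^{n,s}F\big)'$, and Theorem~\ref{Lewis theorem for polynomials} gives surjectivity. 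Hence $\widetilde{\otimes}_\beta^{n,s}E$ is (isometrically) a dual Banach space, with predual $\widetilde{\otimes}_{\beta'}^{n,s}F$.

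Next I would invoke Theorem~\ref{RN para tensores sim} with $\beta'$ in place of $\beta'$ there (note $\beta$ projective with sRN is the hypothesis of that theorem, and its conclusion concerns $\widetilde{\otimes}_{\beta'}^{n,s}(\,\cdot\,)$): since $F$ is Asplund, $\widetilde{\otimes}_{\beta'}^{n,s}F$ is Asplund. A dual Banach space whose predual is Asplund has the Radon-Nikod\'ym property (a dual space $X'$ has RNP iff $X'$ has the RNP, and a space with Asplund predual is even more strongly a dual of an Asplund space --- concretely, $Y$ Asplund $\Rightarrow$ $Y'$ has RNP). Applying this to $Y = \widetilde{\otimes}_{\beta'}^{n,s}F$ and $Y' = \widetilde{\otimes}_\beta^{n,s}E$ yields that $\widetilde{\otimes}_\beta^{n,s}E$ has RNP. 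The converse is immediate: $E$ embeds isometrically and complementably (via a norm-one projection coming from symmetrization composed with a coordinate projection) into $\widetilde{\otimes}_\beta^{n,s}E$, and RNP passes to closed subspaces, so RNP of the tensor product forces RNP of $E$.

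The main obstacle I expect is the first step: establishing cleanly that $\widetilde{\otimes}_\beta^{n,s}F'$ is \emph{isometrically} a dual space with the expected predual, i.e.\ that the canonical map $\widetilde{\otimes}_\beta^{n,s}F' \to \big(\widetilde{\otimes}_{\beta'}^{n,s}F\big)'$ is an isometric isomorphism and not merely a metric surjection. Surjectivity is Theorem~\ref{Lewis theorem for polynomials} (using $F$ Asplund); injectivity/isometry is where the approximation property of $E=F'$ must be used, through the identification $\widetilde{\otimes}_\beta^{n,s}F' = \mathcal Q^{min}(F)$ and the fact that for spaces with the (metric) approximation property the minimal hull embeds isometrically into the maximal ideal (as recalled in the excerpt via \cite[Corollary 5.2 and Proposition 7.5]{Flo01}). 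Once this identification is in hand, the rest is a formal combination of Theorem~\ref{RN para tensores sim} and the classical duality theory of RNP/Asplund spaces.
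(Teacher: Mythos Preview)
Your proposal is correct and follows essentially the same route as the paper: use Theorem~\ref{Lewis theorem for polynomials} to get the metric surjection $\widetilde{\otimes}_\beta^{n,s}F' \twoheadrightarrow \big(\widetilde{\otimes}_{\beta'}^{n,s}F\big)'$, use the approximation property of $E=F'$ to upgrade this to an isometric isomorphism, invoke Theorem~\ref{RN para tensores sim} to see that the predual $\widetilde{\otimes}_{\beta'}^{n,s}F$ is Asplund, and conclude via the classical fact that duals of Asplund spaces have RNP; the converse via complementation of $E$ in the tensor product is also the paper's argument. One small wording slip: in your second paragraph you briefly attribute the metric approximation property to $F$ rather than to $E=F'$, but you correct this in the final paragraph, and the paper likewise places the approximation hypothesis on $E=F'$ (justifying injectivity through the composition $\widetilde{\otimes}_{\pi_s}^{n,s}E \to \widetilde{\otimes}_\beta^{n,s}E \to \big(\widetilde{\otimes}_{\beta'}^{n,s}F\big)' \hookrightarrow \mathcal P^n(F)$ and \cite[Section~4.3]{Flo97}).
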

\begin{proof}

Let $F$ be a predual of $E$ and suppose $E$ has the Radon-Nikod\'{y}m property. Since $F$ is Asplund, by Theorem~\ref{RN para tensores sim} so is $\widetilde{\otimes}^{n,s}_{\beta'}F$. On the other hand, by Theorem~\ref{Lewis theorem for polynomials} we have a metric surjection $ \widetilde{\otimes}_{ \beta }^{n,s} E  \twoheadrightarrow \Big(\widetilde{\otimes}_{ \beta' }^{n,s} F \Big)'$. Since $E=F'$ has the approximation property, this mapping is also injective: this can be seen, for example, a consequence of the injectivity of the following mapping \cite[Section 4.3]{Flo97}:
$$ \widetilde{\otimes}_{ \pi_s }^{n,s} E \rightarrow \widetilde{\otimes}_{ \beta }^{n,s} E  \twoheadrightarrow \Big(\widetilde{\otimes}_{ \beta' }^{n,s} F \Big)' \hookrightarrow \P(F)
$$
Therefore,  $ \widetilde{\otimes}_{ \beta }^{n,s} E$ is the dual of an Asplund Banach space and therefore has the Radon-Nikod\'{y}m property.

Since $E$ is complemented in $\widetilde{\otimes}_{ \beta }^{n,s} E$, the converse follows.
\end{proof}

Any Banach space $E$ with a boundedly complete Schauder basis $\{e_k\}_k$ is a dual space with the Radon-Nikod\'{y}m property and the approximation property. Indeed,  $E$ turns out to be the dual of the subspace $F$ of $E'$ spanned by the dual basic sequence $\{e'_k\}_k$ (which is, by the way, a shrinking basis of $F$). Then we have \begin{equation}\label{isometria} \widetilde{\otimes}_{ \beta }^{n,s} E  \overset 1 = \Big(\widetilde{\otimes}_{ \beta' }^{n,s} F \Big)'\end{equation} The monomials associated to $\{e_k\}_k$ and to $\{e'_k\}_k$ with the appropriate ordering (see \cite{GreRy05}) are Schauder basis of, respectively, $\widetilde{\otimes}_{ \beta }^{n,s} E$ and  $\widetilde{\otimes}_{ \beta' }^{n,s} F$. By the equality~\eqref{isometria}, monomials form a boundedly complete Schauder basis of  $\widetilde{\otimes}_{ \beta }^{n,s} E$  and a shrinking Schauder basis of $\widetilde{\otimes}_{ \beta' }^{n,s} F$.

On the other hand, if we start with a Banach space $E$ with a shrinking Schauder basis and take $F$ as its dual, we are in the analogous situation with the roles of $E$ and $F$ interchanged. So we have:

\begin{corollary} \label{Coro sim 2}
Let $\beta$ be a projective s-tensor norm with the sRN property.
\begin{itemize}
\item[a)] If $E$ has a boundedly complete Schauder basis, then so does $\widetilde{\otimes}_{ \beta }^{n,s} E$.
\item[b)]  If $E$ has a shrinking Schauder basis, then so does $\widetilde{\otimes}_{ \beta' }^{n,s} E$.
\end{itemize}
\end{corollary}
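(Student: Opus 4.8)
The plan is to read off both statements from the duality machinery assembled in the paragraph immediately preceding the corollary, taking care that in part (b) the conclusion lands on $\widetilde{\otimes}_{\beta'}^{n,s}E$ itself rather than on $\widetilde{\otimes}_{\beta'}^{n,s}E'$.

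Part (a) is essentially already carried out. If $E$ carries a boundedly complete basis $\{e_k\}_k$, then $E$ is the dual of $F=\overline{[e'_k:k]}\subset E'$, the basis $\{e'_k\}_k$ is shrinking on $F$, and by \eqref{isometria} one has the isometry $\widetilde{\otimes}_\beta^{n,s}E \overset{1}{=}\big(\widetilde{\otimes}_{\beta'}^{n,s}F\big)'$. The $\{e'_k\}$-monomials form a shrinking basis of $\widetilde{\otimes}_{\beta'}^{n,s}F$ (by \cite{GreRy05} together with James' criterion), so their biorthogonal system, the $\{e_k\}$-monomials, is a boundedly complete basis of the dual $\widetilde{\otimes}_\beta^{n,s}E$. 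I would simply record this.

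For part (b) the crucial point is to interchange the roles of $E$ and its predual correctly. Suppose $E$ has a shrinking basis $\{e_k\}_k$. Then $\{e'_k\}_k$ is a boundedly complete basis of $E'$, so $E'$ is a dual space with the approximation property whose boundedly-complete-basis predual is exactly $E$: the biorthogonal functionals of $\{e'_k\}_k$ inside $E''$ are the canonical images of the $e_k$, and since $\{e_k\}_k$ is shrinking their closed span is $E$ itself. Moreover $E$ is Asplund, being separable with separable dual. I would therefore run the construction of the preceding paragraph with $E'$ in the role of the boundedly complete space and $E$ in the role of the predual $F$; Corollary~\ref{Coro sim 1} (equivalently \eqref{isometria}) then yields $\widetilde{\otimes}_\beta^{n,s}E' \overset{1}{=}\big(\widetilde{\otimes}_{\beta'}^{n,s}E\big)'$. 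Finally I would invoke James' criterion in the shrinking direction: by \cite{GreRy05} the $\{e_k\}$-monomials form a Schauder basis of $\widetilde{\otimes}_{\beta'}^{n,s}E$, and under the natural pairing their biorthogonal functionals are precisely the $\{e'_k\}$-monomials (elementary monomials pair through products of the scalars $\langle e'_i,e_j\rangle=\delta_{ij}$). Since the $\{e'_k\}$-monomials form a Schauder basis of $\widetilde{\otimes}_\beta^{n,s}E'=\big(\widetilde{\otimes}_{\beta'}^{n,s}E\big)'$, the biorthogonal system of the $\{e_k\}$-monomial basis spans the entire dual, whence that basis is shrinking. This produces a shrinking basis of $\widetilde{\otimes}_{\beta'}^{n,s}E$, exactly as asked.

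The only genuine obstacle is the bookkeeping of the interchange: one must set things up so that $E$ (not $E'$) occupies the predual slot $F$, guaranteeing that the isometry reads $\widetilde{\otimes}_\beta^{n,s}E'=\big(\widetilde{\otimes}_{\beta'}^{n,s}E\big)'$ and hence that the shrinking monomial basis lives on $\widetilde{\otimes}_{\beta'}^{n,s}E$ itself; and one must apply James' theorem in the correct direction, namely that a basis whose biorthogonal functionals already form a basis of the \emph{full} dual is shrinking.
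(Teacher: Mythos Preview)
Your proposal is correct and follows essentially the same route as the paper: the argument preceding the corollary establishes the isometry~\eqref{isometria} from Corollary~\ref{Coro sim 1}, uses \cite{GreRy05} for the monomial bases, and then reads off that one side is boundedly complete and the other shrinking; for part (b) the paper simply says to interchange the roles of $E$ and $F$, which is exactly the bookkeeping you spell out. Your explicit invocation of James' criterion makes transparent the step the paper leaves implicit (``by the equality~\eqref{isometria}''), but the underlying argument is the same.
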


The corresponding statement for the 2-fold full tensor norm $\pi$ was shown by Holub in~\cite{Hol71}.

\section{The sRN property for full symmetric tensor norms}\label{Seccion-full simetricos}

In order to prove Theorem~\ref{Lewis theorem for polynomials} we must first show an analogous result for full tensor products (see Theorem~\ref{Lewis theorem} below). So let us first introduce the sRN property for full tensor products in the obvious way:

\begin{definition}
A finitely generated full tensor norm of order $n$ $\alpha$ has the symmetric Radon-Nikodým property (sRN property) if
$$ ( \widetilde{\otimes}_{i=1}^n \ell_1, \alpha ) = \Big(\widetilde{\otimes}_{i=1}^n c_0, \alpha' \Big)'.$$
\end{definition}

As in \cite[Lemma 33.3.]{DefFlo93} we have the following symmetric result for ideals of multilinears form.

\begin{proposition} Let $\alpha$ be a finitely generated full tensor norm of order $n$ with the sRN property.  Then,
$$ ( \widetilde{\otimes}_{i=1}^n \ell_1(J_i), \alpha ) = \Big(\widetilde{\otimes}_{i=1}^n c_0(J_i), \alpha' \Big)'$$
holds isometrically for all index sets $J_1, \dots, J_n$.
\end{proposition}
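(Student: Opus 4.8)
The plan is to reduce the general index-set statement to the basic case $J_i$ finite, and then to the case $J_i = \mathbb{N}$, which is exactly the hypothesis. First I would observe that the canonical map
$$J\colon \big(\widetilde{\otimes}_{i=1}^n \ell_1(J_i),\alpha\big) \longrightarrow \Big(\widetilde{\otimes}_{i=1}^n c_0(J_i),\alpha'\Big)'$$
is always a well-defined norm-one map (this is the general fact, valid for any index sets, coming from the $\alpha$-$\alpha'$ duality on $FIN$ together with finite generation), and moreover that it is always a metric injection when each $\ell_1(J_i)$ has the metric approximation property — which they all do, for any index set, by the same argument referred to in the symmetric case (\cite[Corollary 5.2 and Proposition 7.5]{Flo01}, or the classical analogue for full tensor products). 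So, exactly as in the $\ell_1$ versus $c_0$ discussion after the definition, the only thing to prove is that $J$ is \emph{onto}.

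Next I would treat the finite case: if all $J_i$ are finite then $\ell_1(J_i)=c_0(J_i)'$ isometrically in finite dimensions, and $\big(\otimes_{i=1}^n \ell_1(J_i),\alpha\big)=\big[\big(\otimes_{i=1}^n c_0(J_i),\alpha'\big)\big]'$ is precisely the definition of $\alpha'$ on $FIN$; so the statement is trivially true there and requires no hypothesis. For general $J_i$, I would use finite generation on both sides. Given $\varphi \in \big(\widetilde{\otimes}_{i=1}^n c_0(J_i),\alpha'\big)'$ of norm one, I want to produce $z \in \widetilde{\otimes}_{i=1}^n \ell_1(J_i)$ with $\alpha(z)\le 1+\varepsilon$ and $J z = \varphi$. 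The idea is to exhaust: write $J_i$ as an increasing union of finite subsets $J_i^\lambda$, let $P_i^\lambda$ be the natural norm-one projections on $c_0(J_i)$ (restriction to coordinates in $J_i^\lambda$) and the corresponding projections $R_i^\lambda$ on $\ell_1(J_i)$. On the finite blocks the finite-dimensional identity applies, giving elements $z^\lambda \in \otimes_{i=1}^n \ell_1(J_i^\lambda)$ representing $\varphi \circ(\otimes P_i^\lambda)$ with $\alpha$-norm $\le \|\varphi\|=1$; the point is to pass to a limit. This is where the hypothesis that $\alpha$ has the sRN property enters in an essential way, via the reduction to $c_0$ and $\ell_1$ proper: one embeds the whole situation into $\ell_1$ and $c_0$ (using that $\ell_1(J_i)$ is a $1$-complemented subspace of $\ell_1$ when $J_i$ is countable, and reducing to the countable case by a separability/exhaustion argument on the support of $\varphi$ first) and invokes $\big(\widetilde{\otimes}_{i=1}^n \ell_1,\alpha\big)=\big(\widetilde{\otimes}_{i=1}^n c_0,\alpha'\big)'$ together with the metric mapping properties (2) of $\alpha$ and $\alpha'$ to transport the conclusion back. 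Concretely: first restrict attention to a countable subset of each $J_i$ off which $\varphi$ vanishes (possible since $\varphi$ is determined by countably many coordinates, as $c_0(J_i)$-type arguments show), reducing to $J_i$ countable; then identify $\ell_1(J_i)$, $c_0(J_i)$ with complemented subspaces of $\ell_1$, $c_0$; apply the hypothesis; and use the complementation together with the tensor-norm metric mapping property to land the resulting element back in $\widetilde{\otimes}_{i=1}^n \ell_1(J_i)$ with controlled norm.

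The main obstacle, as I see it, is the limiting argument delivering surjectivity: one has norm-bounded representatives $z^\lambda$ on finite blocks, but $\widetilde{\otimes}_{i=1}^n \ell_1(J_i)$ is not reflexive and there is no weak compactness to extract a limit directly, so one cannot simply take a weak-$*$ cluster point. The clean way around this is precisely the reduction to the genuine $\ell_1$/$c_0$ case plus complementation (so that surjectivity is \emph{assumed}, not reconstructed), rather than trying to reprove surjectivity from the finite-dimensional identity by hand; the reduction to countable index sets and the $1$-complementation of $\ell_1(J_i)$ in $\ell_1$ (and $c_0(J_i)$ in $c_0$), combined with the metric mapping property (2), is what makes this go through. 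The rest — that $J$ is a norm-one metric injection — is, as noted, a formality once the approximation-property input is in place, mirroring the symmetric discussion given after the definition of the sRN property.
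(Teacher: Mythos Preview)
Your approach matches the paper's: the canonical map is an isometric injection by the metric approximation property of $\ell_1(J_i)$, so only surjectivity is at issue; one then reduces each $J_i$ to a countable subset, identifies the resulting $c_0(L_k)$ with (a $1$-complemented piece of) $c_0$, applies the sRN hypothesis, and pushes back via the metric mapping property. The one step you hand-wave --- that a continuous $n$-linear form $T$ on $\prod_i c_0(J_i)$ is supported on countable subsets of each $J_i$ --- is exactly the non-formal content of the paper's proof: it shows that $L=\{(j_1,\dots,j_n):T(e_{j_1},\dots,e_{j_n})\neq 0\}$ is countable by invoking the Littlewood--Bogdanowicz--Pe{\l}czy{\'n}ski property of $c_0$ (if $L$ were uncountable one could extract a sequence with one coordinate weakly null and $|T(e_{j_1^k},\dots,e_{j_n^k})|>\varepsilon$, contradicting weak sequential continuity of multilinear forms on $c_0$), and then factors $T$ through $\prod_k c_0(L_k)$ via norm-one restriction and inclusion maps --- which is precisely your complementation argument. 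Your detour through the finite-dimensional case and the abandoned direct-limit argument is unnecessary; the paper goes straight to the countability step.
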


\begin{proof}
Fix $J_1, \dots, J_n$ index sets, and let us denote $\mathcal{U}(c_0(J_1), \dots , c_0(J_n))=\Big(\widetilde{\otimes}_{i=1}^n c_0(J_i), \alpha' \Big)'$. We must show $\mathcal{U}(c_0(J_1) , \dots , c_0(J_n))=\mathcal{U}^{min}(c_0(J_1) , \dots , c_0(J_n))$ with equal norms.
Take $T \in \mathcal{U}\big(c_0(J_1), \dots, c_0(J_n)\big)$.
If we denote $L = \{ (j_1, \dots, j_n) : T(e_{j_1}, \dots, e_{j_n}) \neq 0 \}$, let us prove that $L$ is countable.
If not, there exist $(j_1^k, \dots, j_n^k)_{k \in \mathbb{N}}$ different indexes such that
$$|T(e_{j_1^k}, \dots, e_{j_n^k})| > \varepsilon.$$
Passing to subsequences we may assume without lost of generality that $e_{j_i^k}$ are weakly convergent sequences and, moreover, $e_{j_1^k} \overset{w}{\longrightarrow} 0$.
This contradicts the Littlewood-Bogdanowicz-Pe{\l}czy{\'n}ski property of $c_0$ \cite{Bogdanowicz,Pel57}.

Let $\Omega_k: J_1 \times \cdots \times J_n \longrightarrow J_k$ given by $\Omega_k (j_1, \dots, j_n)= j_k$.
And let $L_k$ the set $\Omega_k(L) \subset J_k$.
Consider, $\xi_k$ the mapping $c_0(J_k) \to c_0(L_k)$ given by
$$ (a_j)_{j \in J_k} \mapsto (a_j)_{j \in L_k}.$$
And the inclusion $\imath_k: c_0(L_k) \to c_0(J_k)$ defined by
$$ (a_j)_{j \in L_k} \mapsto (b_j)_{j \in J_k},$$
where $b_j$ is $a_j$ if $j \in L_k$ and zero otherwise.
Then, we can factor

$$ \xymatrix{ c_0(J_1) \times \dots \times c_0(J_1) \ar[rr]^{\; \; \; \; \;\; \; \; \; \;\; \; \; \; \;T} \ar[d]_{\xi_1 \times \dots \times \xi_n}
& & \mathbb{K}  \\
c_0(L_1) \times \dots \times c_0(L_n) \ar[rru]^{\overline{T}}},$$
where $\overline{T}:= T \circ (\imath_1 \times \dots \times \imath_n)$.
Since $\alpha$ has the symmetric Radon Nikodým property we have that $\mathcal{U}(c_0(L_1), \dots , c_0(L_n))=\mathcal{U}^{min}(c_0(L_1) , \dots , c_0(L_n))$ with equal norms. Therefore $\overline{T}$ is in $\mathcal{U}^{min}(c_0(L_1), \dots , c_0(L_n))$ with
$$ \|\overline{T}\|_{\mathcal{U}^{min}} = \|\overline{T}\|_{\mathcal{U}} \leq \|T\|_{\mathcal{U}}.$$
Thus $\overline{T}$ belongs to $\mathcal{U}^{min}(c_0(L_1), \dots , c_0(L_n))$ which implies that $T$ also is $\mathcal{U}^{min}(c_0(J_1) , \dots , c_0(J_n)).$
Moreover,
$$\|T\|_{\mathcal{U}^{min}}\leq \|\overline{T}\|_{\mathcal{U}^{min}} \|\xi_1 \times \dots \times \xi_n \| \leq \|T\|_{\mathcal{U}}.$$
\end{proof}

Let $E_1, \dots, E_n$ be Banach spaces. For every $k=1 \dots n$ denote by $I_k: E_k \to \ell_{\infty}(B_{E_k'})$ the inclusion map. Also, let $EXT_k$ denote the canonical extension of a multilinear form to the bidual in the $k$-th coordinate (i.e. the multilinear version of the canonical extension $\varphi^{\wedge}$ and $^\wedge \varphi$ of a bilinear for $\varphi$, as in \cite[Section 1.9]{DefFlo93}). It is important to mention that, as in \cite[Section 6.7.]{DefFlo93}, we also have (with a similar proof) a multilinear version of the \emph{Extension Lemma}. In other words, extending a multilinear form to any bidual preserves the norm as a linear functional on the tensor product, for any finitely generated tensor norm.

For $\varphi : E_1 \times
\dots \times E_n \to \mathbb{K}$ we denote $ \varphi^n$ the
associated $(n-1)$-linear mapping $ \varphi^n: E_1 \times \dots  \times E_{n-1} \to E_n'$.
Now, if $T : E_n' \to F'$ is a linear operator, then the $(n-1)$-linear mapping $\rho: E_1 \times \dots  \times E_{n-1} \times F' \to \mathbb{K}$
given by $T \circ \varphi^{n}$ induces a $n$-linear form on $E_1 \times
\dots \times E_{n-1}\times F$.
It is not hard to check that
$$\rho (e_1, \dots, e_{n-1}, f)= (EXT_n)\varphi(e_1, \dots, e_{n-1}, T'J_F(f)),$$
where $J_F : F \to F''$ is the canonical inclusion map.

For every $k=1 \dots n$ we define an operator
$$\Psi_k:  \Big( (\widetilde\otimes_{j=1}^{k-1}E_j)  \widetilde{\otimes}  c_0(B_{E_k'}) \widetilde{\otimes} (\widetilde\otimes_{j=k+1}^{n}E_j), / \alpha' \s  \Big)' \rightarrow \Big( \widetilde{\otimes}_{i=1}^n E_i, / \alpha' \s  \Big)',$$
by the composition
$\big((\widetilde\otimes_{j=1}^{k-1}Id_{E_k})  \widetilde{\otimes}  I_k \widetilde{\otimes} (\widetilde\otimes_{j=k+1}^{n}Id_{E_k})\big)' \circ EXT_k$.

%
%
%

The following remark is easy to see:
\begin{remark} \label{diagrama conmuta}
Let $E_1, \dots, E_n$ be Banach spaces. For every $k$ the following diagram conmutes:

\xymatrix{ \big( (\widetilde\otimes_{j=1}^{k-1}E_j')  \widetilde{\otimes}  \ell_1(B_{E_k'}) \widetilde{\otimes} (\widetilde\otimes_{j=k+1}^{n}E_j'), \s \alpha / \big) \ar[r] \ar[d]^{(\widetilde\otimes_{j=1}^{k-1}Id_{E_j'})  \widetilde{\otimes}  Q_k \widetilde{\otimes} (\widetilde\otimes_{j=k+1}^{n}Id_{E_j'})}
& \Big( (\widetilde\otimes_{j=1}^{k-1}E_j)  \widetilde{\otimes}  c_0(B_{E_k'}) \widetilde{\otimes} (\widetilde\otimes_{j=k+1}^{n}E_j), / \alpha' \s  \Big)' \ar[d]^{\Psi_k}  \\
(E_1' \widetilde{\otimes} \dots \widetilde{\otimes} E_{k-1}' \widetilde{\otimes} E_k' \widetilde{\otimes} E_{k+1}' \widetilde{\otimes} \dots \widetilde{\otimes} E_n', \s \alpha / ) \ar[r]
& \Big( \widetilde{\otimes}_{i=1}^n E_i, / \alpha' \s  \Big)' ,}
where $Q_k: \ell_1(B_{E_k'}) \twoheadrightarrow E_k'$ is the quotient map.
\end{remark}

Now an important proposition:

\begin{proposition} \label{cociente}
Let $E_1, \dots, E_n$ be Banach spaces. If $E_k$ is Asplund then $\Psi_k$ is a metric surjection.
\end{proposition}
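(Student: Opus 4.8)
The plan is to strip $\Psi_k$ down to a one-coordinate lifting problem and then feed in the Radon-Nikod\'ym property of $E_k'$. Because $/ \alpha' \s$ is injective and $I_k\colon E_k\hookrightarrow\ell_\infty(B_{E_k'})=c_0(B_{E_k'})''$ is an isometric embedding, the operator $\bigl(\widetilde{\otimes}_{j<k}Id_{E_j}\widetilde{\otimes}I_k\widetilde{\otimes}_{j>k}Id_{E_j}\bigr)'$ is a metric surjection onto $\bigl(\widetilde{\otimes}_{i=1}^n E_i,/ \alpha' \s\bigr)'$, while the multilinear Extension Lemma makes $EXT_k$ a linear isometry onto its image. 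Hence $\|\Psi_k\|\le 1$, and to see that $\Psi_k$ is a metric surjection it suffices, given $\varphi\in\bigl(\widetilde{\otimes}_{i=1}^n E_i,/ \alpha' \s\bigr)'$ with $\|\varphi\|<1$ and $\delta>0$, to produce $T$ in the domain with $\|T\|<1+\delta$ and $\Psi_k(T)=\varphi$. Attaching to such a $T$ the $(n-1)$-linear map $\Phi_T\colon\prod_{j\ne k}E_j\to c_0(B_{E_k'})'=\ell_1(B_{E_k'})$, and to $\varphi$ the map $\Phi_\varphi\colon\prod_{j\ne k}E_j\to E_k'$, a direct computation with the definitions of $EXT_k$ (the canonical, $w^*$-continuous bidual extension) and of $I_k$ shows that $\Psi_k(T)=\varphi$ holds exactly when $Q_k\circ\Phi_T=\Phi_\varphi$, where $Q_k\colon\ell_1(B_{E_k'})\twoheadrightarrow E_k'$ is the quotient map of Remark~\ref{diagrama conmuta}. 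So everything reduces to lifting $\Phi_\varphi$ through $Q_k$ in such a way that the attached form stays $/ \alpha' \s$-bounded with norm close to $\|\varphi\|$.

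Here the hypothesis enters, through the equivalence: $E_k$ is Asplund if and only if $E_k'$ has the Radon-Nikod\'ym property. By injectivity of $/ \alpha' \s$ and Hahn-Banach along $E_k\hookrightarrow\ell_\infty(B_{E_k'})$, one extends $\varphi$ to a $/ \alpha' \s$-bounded form $\hat\varphi$ of norm $<1$ on $(\widetilde{\otimes}_{j<k}E_j)\widetilde{\otimes}\ell_\infty(B_{E_k'})\widetilde{\otimes}(\widetilde{\otimes}_{j>k}E_j)$. If $\hat\varphi$ can be chosen in the image of $EXT_k$ --- equivalently, so that for each fixed value of the remaining variables the functional it induces on $\ell_\infty(B_{E_k'})$ is $w^*$-continuous, i.e.\ lies in $\ell_1(B_{E_k'})$ --- then $T:=\hat\varphi|_{(\widetilde{\otimes}_{j<k}E_j)\widetilde{\otimes}c_0(B_{E_k'})\widetilde{\otimes}(\widetilde{\otimes}_{j>k}E_j)}$ works, since $EXT_k(T)=\hat\varphi$ and $\|T\|=\|\hat\varphi\|<1$. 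A priori this fails: a bounded functional on $\ell_\infty(B_{E_k'})$ decomposes into an atomic part lying in $\ell_1(B_{E_k'})$ and a singular part vanishing on $c_0(B_{E_k'})$, and the singular parts are precisely what destroy the identity $Q_k\circ\Phi_T=\Phi_\varphi$. The Radon-Nikod\'ym property of $E_k'$ is what lets one choose the extension free of singular parts: after pushing the remaining coordinates into $\mathcal{L}_\infty$-spaces (again by injectivity), $\Phi_\varphi$ becomes an $E_k'$-valued object carrying an integral-type representation, which --- since $E_k'$ has the Radon-Nikod\'ym property --- can be taken ``nuclear''; composing it with the norm-preserving section $x'\mapsto\|x'\|\,e_{x'/\|x'\|}$ of $Q_k$ gives a lift $\Phi_T$ with $Q_k\circ\Phi_T=\Phi_\varphi$ whose attached form $T$ is $/ \alpha' \s$-bounded of norm $<1+\delta$, the quantitative estimate following from the metric-surjection properties of the maps in Remark~\ref{diagrama conmuta} and the Extension Lemma.

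The crux --- and the expected main obstacle --- is this disintegration/lifting step: one must secure $Q_k\circ\Phi_T=\Phi_\varphi$ and control the $/ \alpha' \s$-norm of $T$ simultaneously. Unlike the classical two-fold Lewis theorem, which is organised around factorizations of linear operators, here one works with multilinear forms and only with the weaker symmetric Radon-Nikod\'ym information, so the reduction to $c_0$ must be performed one coordinate at a time --- which is exactly why $\Psi_k$ is singled out. The remaining points --- that $\|\Psi_k\|\le 1$, that the bidual extensions and inclusion maps compose as asserted, and that Remark~\ref{diagrama conmuta} delivers the norm bound --- are routine once the coherent lift is in hand.
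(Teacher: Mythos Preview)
Your setup is correct: $\|\Psi_k\|\le 1$, one Hahn--Banach extends $\varphi$ along the $k$-th coordinate to $\widetilde\varphi$ on $(\widetilde\otimes_{j<k}E_j)\widetilde\otimes\ell_\infty(B_{E_k'})\widetilde\otimes(\widetilde\otimes_{j>k}E_j)$, and the task is to lift $\Phi_\varphi$ through $Q_k$ with $(/ \alpha' \s)'$-norm control. The gap is in the lifting itself.

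Your proposed mechanism --- obtain a nuclear/integral representation of $\Phi_\varphi$ and lift each value through the elementwise section $x'\mapsto\|x'\|\,e_{x'/\|x'\|}$ --- does not give the needed estimate. That section is \emph{not linear}; lifting term by term in a series representation only controls the \emph{nuclear} ($\pi$-) norm of the resulting form $T$, and for a general $\alpha$ there is no reason $\|T\|_{(/\alpha'\s)'}$ should be close to $\|\varphi\|_{(/\alpha'\s)'}$. Your argument, as written, would only go through when $\alpha=\pi$. The detour through ``pushing the remaining coordinates into $\mathcal L_\infty$-spaces'' does not repair this and is in fact unnecessary.

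What the paper does instead is produce a \emph{linear} lift, via the Lewis--Stegall theorem. Since $E_k$ is Asplund, $E_k'$ has the Radon--Nikod\'ym property; the domain $\ell_\infty(B_{E_k'})'$ of $I_k'$ is an abstract $L_1$-space, so $I_k'\colon\ell_\infty(B_{E_k'})'\to E_k'$ factors as $I_k'=Q_k\circ A$ with $A\colon\ell_\infty(B_{E_k'})'\to\ell_1(B_{E_k'})$ linear and $\|A\|\le 1+\varepsilon$. One then sets
\[
\rho(x_1,\dots,x_{k-1},a,x_{k+1},\dots,x_n):=(EXT_k)\widetilde\varphi\bigl(x_1,\dots,x_{k-1},A'J_{c_0(B_{E_k'})}(a),x_{k+1},\dots,x_n\bigr),
\]
i.e.\ $\rho^k=A\circ(\widetilde\varphi)^k$. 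Because this is composition with a \emph{linear} operator, the ideal property together with the Extension Lemma gives $\|\rho\|_{(/\alpha'\s)'}\le(1+\varepsilon)\|\varphi\|_{(/\alpha'\s)'}$ for \emph{any} finitely generated $\alpha$. The identity $\Psi_k(\rho)=\varphi$ then follows from $Q_kA=I_k'$ and $I_k'(\widetilde\varphi)^k=\varphi^k$. The moral: the Radon--Nikod\'ym property enters not through a disintegration of $\Phi_\varphi$, but through Lewis--Stegall, which converts it into a single bounded linear operator $A$ doing the lift uniformly.
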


\begin{proof}
We will prove it assuming that $k=n$ (the other cases are completely analogous). Notice that $\Psi_n$ has norm less or equal to one (since $EXT_n$ is an isometry).

Fix $\varphi \in \Big(\widetilde{\otimes}_{i=1}^n E_i, / \alpha' \s  \Big)'$ and $\varepsilon > 0$ and let $\widetilde{\varphi} \in \Big( (\widetilde{\otimes}_{i=1}^{n-1} E_i) \widetilde{\otimes} \ell_\infty(B_{E_n'}), / \alpha' \s  \Big)'$ a Hahn-Banach extension of $\varphi$.
Since $E_n'$ has the Radon-Nikodým property, by the Lewis-Stegall theorem the adjoint of the canonical inclusion $I_n: E_n \to \ell_{\infty}(B_{E_n'})$ factors through $\ell_1(B_{E_n'})$ via

\begin{equation} \label{factorizacion}
\xymatrix{ {\ell_{\infty}(B_{E_n'})'}  \ar[rr]^{I_n'} \ar[rd]^{A} & & {E_n'} \\
& {\ell_1(B_{E_n'})} \ar@{->>}[ru]^{Q_n} & }
\end{equation}
where $Q_n$ is the canonical quotient map and $\|A\| \leq (1+\varepsilon)$.
Let $\rho: E_1 \times \dots \times E_{n-1} \times c_0(B_{E_n'}) \to \mathbb{K}$ given by the formula
$\rho (x_1, \dots, x_{n-1}, a)= (EXT_n)\widetilde{\varphi}(x_1, \dots, x_{n-1}, A'J_{c_0(B_{E_n'})}(a))$ ($\rho$ is the $n$-linear form on $E_1 \times \dots \times E_{n-1} \times c_0(B_{E_n'})$ associated to  $A \circ (\widetilde{\varphi})^n$). Using the ideal property and the fact that the extension to the bidual is an isometry $\rho \in  \Big( (\widetilde{\otimes}_{i=1}^{n-1} E_i ) \widetilde{\otimes} c_0(B_{E_n'}), / \alpha' \s  \Big)'$ and $\|\rho\| \leq \|\varphi\| (1+\varepsilon)$.

If we show that $\Psi_n (\rho)=\varphi$ we are done. It is an
easy exercise to prove that $I_n' (\widetilde{\varphi})^n = \varphi^n$.
It is also easy to see that $I_n(x) (a) = Q_n(a)(x)$ for $x \in
E_n$ and $a \in \ell_1(B_{E_n'})$.

Now, $\Psi_n (\rho)=( \widetilde{\otimes}_{i=1}^{n-1} Id_{E_i} \widetilde{\otimes} I_n )' \circ
(EXT_n) (\rho)$. Then,
\begin{align*}
\Psi_n (\rho)(x_1, \dots, x_n)  & = (I_nx_n) [\rho(x_1, \dots,x_{n-1},\cdot)] \\
& = (I_nx_n) A (\widetilde{\varphi})^n(x_1, \dots, x_{n-1}) \\
& = Q_n A (\widetilde{\varphi})^n(x_1, \dots, x_{n-1}) (x_n) \\
& = I_n' (\widetilde{\varphi})^n(x_1, \dots, x_{n-1}) (x_n)  \\
& = \varphi^n(x_1, \dots, x_{n-1}) (x_n) \\
& = \varphi(x_1, \dots, x_n).
\end{align*}
\end{proof}


\smallskip
The following result is the version of Theorem~\ref{Lewis theorem for polynomials} for full symmetric tensor products. It should be noted that it holds for tensor products of different spaces.

\begin{theorem}\label{Lewis theorem}
Let $\alpha$ be a tensor norm with the sRN property and $E_1, \dots, E_n$ be Asplund spaces.
Then
$$ (\widetilde{\otimes}_{i=1}^n E_i', \s \alpha / ) \twoheadrightarrow \Big(\widetilde{\otimes}_{i=1}^n E_i, / \alpha' \s \Big)'$$ is a metric surjection.

In particular if $(\mathfrak{A},\mathbf{A})$ is the maximal ideal (of multilinear forms) associated with $\alpha$, then
$$(\s\mathfrak{ A }/)^{min}(E_1, \dots, E_n) = \s\mathfrak{A}/(E_1, \dots, E_n).$$
\end{theorem}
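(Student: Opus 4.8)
The strategy is to reduce the statement about general Asplund spaces $E_1,\dots,E_n$ to the defining case of $\ell_1$ and $c_0$ by iterating the surjection $\Psi_k$ from Proposition~\ref{cociente} one coordinate at a time, using the commuting diagram of Remark~\ref{diagrama conmuta} to keep track of the maps. The first step is to set up the composition. For each $k=1,\dots,n$ the map $\Psi_k$ lets us replace the $k$-th factor $E_k$ (on the predual side, a $c_0(B_{E_k'})$) by $E_k'$ itself (on the dual side, coming from $\ell_1(B_{E_k'})$), at the cost of passing to a Hahn--Banach extension and a factorization through $\ell_1$ that is available precisely because $E_k$ is Asplund (equivalently $E_k'$ has the Radon--Nikod\'ym property, which is what Proposition~\ref{cociente} uses via Lewis--Stegall). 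Chaining $\Psi_1,\dots,\Psi_n$, one obtains a norm-one map
$$ \Big( \widetilde{\otimes}_{i=1}^n \ell_1(B_{E_i'}), \s\alpha/ \Big)' \longrightarrow \cdots \longrightarrow \Big( \widetilde{\otimes}_{i=1}^n E_i, /\alpha'\s \Big)', $$
and, more to the point, the commutativity in Remark~\ref{diagrama conmuta} shows that this composition, precomposed with the canonical arrow coming from the sRN property applied to the index sets $J_i=B_{E_i'}$, factors the canonical map $(\widetilde{\otimes}_{i=1}^n E_i', \s\alpha/) \to (\widetilde{\otimes}_{i=1}^n E_i, /\alpha'\s)'$ through the quotient maps $Q_k \colon \ell_1(B_{E_k'}) \twoheadrightarrow E_k'$.

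The second step is to invoke the Proposition proved just above (the one extending the sRN identity to arbitrary index sets): since $\alpha$ has the sRN property,
$$ \Big( \widetilde{\otimes}_{i=1}^n \ell_1(B_{E_i'}), \alpha \Big) \overset{1}{=} \Big( \widetilde{\otimes}_{i=1}^n c_0(B_{E_i'}), \alpha' \Big)', $$
and since $\ell_1(B_{E_i'})$ and $c_0(B_{E_i'})$ are complemented after applying the projective/injective-hull descriptions (Theorems 20.6 and 20.7 quoted in the Preliminaries), the same identity holds with $\alpha$ replaced by $\s\alpha/$ and $\alpha'$ by $/\alpha'\s$. Then $(\widetilde{\otimes}_{i=1}^n \ell_1(B_{E_i'}), \s\alpha/)$ is itself a quotient of $(\widetilde{\otimes}_{i=1}^n \ell_1(B_{E_i'})', \s\alpha/)$ — no, more simply: we already know the canonical arrow $J$ for $c_0(B_{E_i'})$ is a metric surjection, so $(\widetilde{\otimes}_{i=1}^n \ell_1(B_{E_i'}), \s\alpha/) \twoheadrightarrow (\widetilde{\otimes}_{i=1}^n c_0(B_{E_i'}), /\alpha'\s)'$. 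Composing with the iterated $\Psi_k$ and using the quotient maps $Q_k$ on the left (each $Q_k$ is a metric surjection, hence so is $\widetilde{\otimes}_{i=1}^n \ell_1(B_{E_i'}) \twoheadrightarrow \widetilde{\otimes}_{i=1}^n E_i'$ by the metric mapping property of $\s\alpha/$), one concludes that the canonical map $(\widetilde{\otimes}_{i=1}^n E_i', \s\alpha/) \to (\widetilde{\otimes}_{i=1}^n E_i, /\alpha'\s)'$ is a composition of metric surjections, hence a metric surjection. The identification $(\s\mathfrak{A}/)^{min}(E_1,\dots,E_n) = \s\mathfrak{A}/(E_1,\dots,E_n)$ is then just the reformulation of this surjectivity together with the fact (from \cite{Flo01}, as used earlier for $\ell_1$) that the map is automatically an isometry onto its image because $\ell_1(B_{E_i'})$ has the metric approximation property.

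The third, bookkeeping, step is to verify that the norm estimates compose correctly: each $\Psi_k$ has norm $\le 1$ and is a metric surjection (loses only an $\varepsilon$ in lifting, which can be absorbed), and each $Q_k$ is a metric surjection, so the $\varepsilon$'s can be chosen to telescope; letting $\varepsilon\to 0$ gives the exact metric surjection. One should also check the mild point that the iterated construction is legitimate, i.e. that after applying $\Psi_n$ to pass from the last coordinate $c_0(B_{E_n'})$ to $E_n'$, the spaces $E_1,\dots,E_{n-1}$ in the remaining coordinates are still Asplund (they are, unchanged) so that $\Psi_{n-1}$ applies, and so on — here one uses that $\Psi_k$ acts in a single coordinate leaving the others untouched, exactly as in the definition of $\Psi_k$ before Remark~\ref{diagrama conmuta}.

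\textbf{Main obstacle.} The genuinely delicate part is not any single inequality but making the "iterate $\Psi_k$ coordinate by coordinate" argument rigorous: one must track, through the whole chain, that the composite map is indeed the \emph{canonical} map (and not some other lift), which is precisely the role of the commuting square in Remark~\ref{diagrama conmuta}, and one must be careful that at each stage the relevant tensor norm on the mixed object $(\widetilde{\otimes}_{j<k}E_j')\widetilde{\otimes}\ell_1(B_{E_k'})\widetilde{\otimes}(\widetilde{\otimes}_{j>k}E_j)$ is consistently the projective hull $\s\alpha/$ on one side and $/\alpha'\s$ on the dual side, and that passing to hulls commutes with the substitutions $c_0(B_{E_k'})\rightsquigarrow E_k$ and $\ell_1(B_{E_k'})\rightsquigarrow E_k'$ — this is where the projective/injective associate descriptions from the Preliminaries and the duality relations $(\s\alpha/)' = /\alpha'\s$ are doing the real work. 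Everything else is routine once the diagram chase is organized.
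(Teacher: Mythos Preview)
Your proposal is correct and follows essentially the same route as the paper: start from the sRN identity on $\ell_1(B_{E_i'})$/$c_0(B_{E_i'})$ (the Proposition extending sRN to arbitrary index sets), then iterate the metric surjections $\Psi_k$ of Proposition~\ref{cociente} one coordinate at a time, using the commuting squares of Remark~\ref{diagrama conmuta} with the quotient maps $Q_k$ on the left to conclude that the bottom canonical arrow $R_n$ is a metric surjection. The paper organizes this as a single large ladder diagram with horizontal maps $R_0,\dots,R_n$ and shows inductively that each $R_k$ is a metric surjection, but the content is identical to what you describe.

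Two small cleanups: first, $R_n$ is not literally a \emph{composition} of metric surjections; rather, $R_n\circ(\text{left column})=(\text{right column})\circ R_0$, and since both columns and $R_0$ are metric surjections you deduce $R_n$ is one --- you state this correctly earlier but blur it later. Second, the ``In particular'' clause does not require any approximation-property argument: once the canonical map $\widetilde{\otimes}_{\s\alpha/}E_i'\twoheadrightarrow(\s\mathfrak{A}/)(E_1,\dots,E_n)$ is a metric surjection, the equality $(\s\mathfrak{A}/)^{min}=\s\mathfrak{A}/$ is immediate from the definition of the minimal hull as the closure of the image of this map; your remark about the metric approximation property of $\ell_1(B_{E_i'})$ is relevant to injectivity of $J$ (already discussed in Section~\ref{Seccion-SRN}), not to the identification of ideals here.
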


\begin{proof}
Using Remark~\ref{diagrama conmuta} we know that the following diagram commutes in each square.

\begin{equation*}
\xymatrix{ \big( \widetilde{\otimes}_{i=1}^n \ell_1(B_{E_i'}), \s \alpha / \big) \ar[rr]^{R_0} \ar@{->>}[dd]^{\widetilde{\otimes}_{i=1}^{n-1} Id_{\ell_1(B_{E_i'})} \widetilde{\otimes} P_n }
&  & \Big( \widetilde{\otimes}_{i=1}^n c_0(B_{E_i'}), / \alpha' \s  \Big)' \ar[d]_{EXT_n} \\
& & \Big( (\widetilde{\otimes}_{i=1}^{n-1} c_0(B_{E_i'}) )  \widetilde{\otimes} \ell_{\infty}(B_{E_n'}), / \alpha' \s  \Big)' \ar@{->>}[d]_{\big( (\widetilde{\otimes}_{i=1}^{n-1} Id_{c_0(B_{E_i'})} ) \widetilde{\otimes} I_n \big)'} \\
( (\widetilde{\otimes}_{i=1}^{n-1} \ell_1(B_{E_i'}) ) \widetilde{\otimes} E_n', \s \alpha / ) \ar[rr]^{R_1} \ar@{->>}[dd]^{(\widetilde{\otimes}_{i=1}^{n-2} Id_{\ell_1(B_{E_i'})} ) \widetilde{\otimes} P_{n-1} \widetilde{\otimes} Id_{E_n'} }
& & \Big( (\widetilde{\otimes}_{i=1}^{n-1} c_0(B_{E_i'}) )  \widetilde{\otimes} E_n, / \alpha' \s  \Big)' \ar[d]_{EXT_{n-1}} \\
& & \Big( (\widetilde{\otimes}_{i=1}^{n-2} c_0(B_{E_i'}) ) \widetilde{\otimes} \ell_{\infty}(B_{E_{n-1}'}) \widetilde{\otimes} E_n, / \alpha' \s  \Big)' \ar@{->>}[d]_{\big( (\widetilde{\otimes}_{i=1}^{n-1} Id_{c_0(B_{E_i'})} )  \widetilde{\otimes} I_{n-1} \widetilde{\otimes} Id_{E_n'} \big)'} \\
((\widetilde{\otimes}_{i=1}^{n-2} \ell_1(B_{E_i'}) ) \widetilde{\otimes} E_{n-1}' \widetilde{\otimes} E_n', \s \alpha / ) \ar[rr]^{R_2} \ar@{->>}[d]
& & \Big( (\widetilde{\otimes}_{i=1}^{n-2} c_0(B_{E_i'}) ) \widetilde{\otimes} E_{n-1} \widetilde{\otimes} E_n, / \alpha' \s  \Big)' \ar[d] \\
& \dots & \\
\dots & \dots & \dots \\
\ar@{->>}[d] & \dots & \ar[d]\\
(\ell_1(B_{E_1'}) \widetilde{\otimes} (\widetilde{\otimes}_{i=2}^{n} E_i' ), \s \alpha / ) \ar[rr]^{R_{n-1}} \ar@{->>}[dd]^{P_1 \widetilde{\otimes} (\widetilde{\otimes}_{i=2}^{n} Id_{E_i'} )}
& & \Big( c_0(B_{E_1'}) \widetilde{\otimes} (\widetilde{\otimes}_{i=2}^{n} E_i), / \alpha' \s  \Big)'  \ar[d]_{EXT_1} \\
& & \Big( \ell_{\infty}(B_{E_1'}) \widetilde{\otimes} (\widetilde{\otimes}_{i=2}^{n} E_i), / \alpha' \s  \Big)' \ar@{->>}[d]_{\big(I_1 \widetilde{\otimes} (\widetilde{\otimes}_{i=2}^{n} Id_{E_i'}) \big)'} \\
(\widetilde{\otimes}_{i=1}^{n} E_i', \s \alpha / ) \ar[rr]^{R_n}
& & \Big( \widetilde{\otimes}_{i=1}^{n} E_i, / \alpha' \s  \Big)'  \\
}
\end{equation*}

Looking at the first commutative diagram and using that $R_0$ is a metric surjection (Proposition~\ref{cociente}) we get that $R_1$ is also a metric surjection.
The same argument can be applied to the second commutative diagram now that we know that $R_1$ is metric surjection. Thus, $R_2$ is also a metric surjection. Reasoning like this, it follows that $R_n : (\widetilde{\otimes}_{i=1}^n E_i', \s \alpha / ) \rightarrow \Big(\widetilde{\otimes}_{i=1}^n E_i, / \alpha' \s \Big)'$ is a metric surjection.
\end{proof}

We will call $\Psi : \big(  \widetilde{\otimes}_{i=1}^{n} c_0(B_{E_i}') \big)' \to \big( \widetilde{\otimes}_{i=1}^{n} E_i' \big)'$ the composition of the downward arrows in the right side of the last diagram. The following proposition shows how to describe the map $\Psi$ more easily (it will be useful to prove the polynomial version the of the last theorem).

\begin{proposition}\label{reescritura del mapa}
The arrow $\Psi :  \big( \widetilde{\otimes}_{i=1}^{n} c_0(B_{E_i}', /\alpha' \s)  \big)' \to \big( \widetilde{\otimes}_{i=1}^{n} E_i, /\alpha' \s \big)'$ is the composition map
$$ \big( \widetilde{\otimes}_{i=1}^{n} c_0(B_{E_i}'), /\alpha' \s \big)' \overset{EXT}\longrightarrow \big( \widetilde{\otimes}_{i=1}^{n} \ell_{\infty}(B_{E_i}'), /\alpha' \s \big)' \overset{(\widetilde{\otimes}_{i=1}^{n} I_i )'}\longrightarrow \big( \widetilde{\otimes}_{i=1}^{n} E_i, /\alpha' \s \big)',$$
where $EXT$ stands for the iterated extension to the bidual given by $(EXT_n) \circ \dots \circ (EXT_1)$ (we extend from the left to the right).
\end{proposition}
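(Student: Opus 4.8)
The plan is to show that the two composites in the statement, applied to an arbitrary $\varphi\in\big(\widetilde{\otimes}_{i=1}^{n}c_0(B_{E_i}'),/\alpha'\s\big)'$, produce the same element of $\big(\widetilde{\otimes}_{i=1}^{n}E_i,/\alpha'\s\big)'$; since both maps are linear, this is all that is needed. First I would unwind $\Psi$ from the right-hand column of the diagram in the proof of Theorem~\ref{Lewis theorem}. Writing $R_k$ for the map induced by precomposition with $I_k\colon E_k\to\ell_\infty(B_{E_k'})$ in the $k$-th slot (these are the arrows labelled $(\cdots\widetilde{\otimes}I_k\widetilde{\otimes}\cdots)'$ there), one reads off
\[
\Psi \;=\; R_1\circ EXT_1\circ R_2\circ EXT_2\circ\cdots\circ R_{n-1}\circ EXT_{n-1}\circ R_n\circ EXT_n .
\]

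\textbf{Step 1 (formal rearrangement).} Because $EXT_k$ modifies only the $k$-th coordinate and $R_j$ only the $j$-th, the two commute whenever $j\neq k$: one may plug a fixed vector of $E_j$ into the $j$-th argument either before or after taking the $w^{*}$-limit that defines the extension in the $k$-th argument. The $R_j$'s also commute among themselves. Sliding each $R_k$ to the left past every symbol that carries an index strictly smaller than $k$ (all of which have index $\neq k$), the displayed expression collapses to
\[
\Psi \;=\; \big(R_1\circ\cdots\circ R_n\big)\circ\big(EXT_1\circ EXT_2\circ\cdots\circ EXT_n\big)\;=\;\big(\widetilde{\otimes}_{i=1}^{n}I_i\big)'\circ\big(EXT_1\circ EXT_2\circ\cdots\circ EXT_n\big).
\]
Note that here the coordinates get extended in the order $n,n-1,\dots,1$, whereas the operator $EXT=EXT_n\circ\cdots\circ EXT_1$ in the statement extends them in the order $1,2,\dots,n$. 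So the whole proposition reduces to showing that the iterated extension of $\varphi$ to the bidual does not depend on the order in which the coordinates are extended. (That all the maps above really go between the stated spaces is exactly the multilinear Extension Lemma recalled in Section~\ref{Seccion-full simetricos}, together with $c_0(\Gamma)''=\ell_\infty(\Gamma)$ and the injectivity of $/\alpha'\s$, which makes $\widetilde{\otimes}_{i=1}^{n}I_i$ an isometric embedding, hence $\big(\widetilde{\otimes}_{i=1}^{n}I_i\big)'$ a well-defined metric surjection.)

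\textbf{Step 2 (order-independence, the crux).} Here I would invoke that each space $c_0(B_{E_i}')$ is Arens regular: every bounded operator $T\colon c_0(\Gamma)\to\ell_1(\Lambda)$ is compact, since $\{T\delta_\gamma\}_{\gamma\in\Gamma}$ is a weakly unconditionally Cauchy family in $\ell_1(\Lambda)$ and, $\ell_1(\Lambda)$ containing no copy of $c_0$, such a family is unconditionally summable, which forces $T(B_{c_0(\Gamma)})$ to be totally bounded. Consequently every bounded bilinear form on $c_0(B_{E_i}')\times c_0(B_{E_j}')$ is regular, i.e.\ its two extensions to the biduals coincide; and then the usual induction on $n$ (freeze all but two coordinates at their bidual values and apply bilinear regularity to the remaining pair, the inductive hypothesis in fewer variables ensuring that the partially extended form is well defined) shows that the iterated extension of any bounded $n$-linear form on $\prod_i c_0(B_{E_i}')$ is independent of the order of the coordinates. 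In particular $EXT_1\circ\cdots\circ EXT_n=EXT$ on $\varphi$, and combining with Step~1 yields $\Psi=\big(\widetilde{\otimes}_{i=1}^{n}I_i\big)'\circ EXT$, as claimed. The main obstacle is precisely this step: realizing that order-independence is what is missing after the purely formal rearrangement, and supplying it from the Arens regularity of the $c_0(B_{E_i}')$ — while resisting the (illegitimate) temptation to commute two $EXT_k$'s directly, and keeping track of the fact that the order of extension occurring in $\Psi$ is the reverse of the one defining $EXT$.
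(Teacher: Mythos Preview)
Your argument is correct, and in one respect it is more careful than the paper's own proof. The paper simply carries out a direct elementwise computation for $n=2$ (leaving the general case to the reader), arriving at $(I_1\widetilde{\otimes}I_2)'(EXT)(\rho)$ through a short chain of formal equalities. However, in that computation the paper writes $\Psi$ as $(id_{E_1}\widetilde\otimes I_2)'\circ EXT_2\circ(I_1\widetilde\otimes Id_{c_0(B_{E_2'})})'\circ EXT_1$, i.e.\ it processes coordinate $1$ first and coordinate $2$ second, whereas the right column of the big diagram in the proof of Theorem~\ref{Lewis theorem} clearly processes coordinate $n$ first and coordinate $1$ last. With the ordering used in the paper's computation the only commutation needed is the trivial one between $EXT_k$ and $R_j$ for $j\neq k$, and the $EXT$'s land automatically in the order $EXT_n\circ\cdots\circ EXT_1=EXT$; no Arens regularity is needed. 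With the diagram's ordering---the one you read off---after the same formal commutations the $EXT$'s come out in the \emph{reverse} order, and you correctly saw that one then needs the Arens regularity of $c_0(\Gamma)$ (which you supply via compactness of every bounded operator $c_0(\Gamma)\to\ell_1(\Lambda)$) to finish. So both routes are valid; your Step~2 addresses a point the paper sidesteps by (perhaps inadvertently) reversing the order of the $\Psi_k$'s in its $n=2$ calculation.

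One small remark on Step~2: the induction is cleanest via adjacent transpositions. Swapping $EXT_j$ with $EXT_{j+1}$ amounts, once the remaining (possibly already extended) variables are frozen, to the bilinear regularity of a bounded form on $c_0(\Gamma)\times c_0(\Lambda)$; since any permutation is a product of adjacent transpositions this gives order-independence. Your phrasing ``freeze all but two coordinates at their bidual values and apply bilinear regularity'' is slightly circular as written, because it presupposes that the partially extended form is already well defined, which is part of what is being proved.
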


\begin{proof}
For the readers' sake we will give a proof for the case $n=2$. Let $\rho \in \big(c_0(B_{E_1'})  \widetilde{\otimes} c_0(B_{E_2'}) , /\alpha' \s \big)'$, then
\begin{align*}
\Psi(\rho) (e_1,e_2) & = ( id_{E_1} \widetilde{\otimes} I_2)'(EXT_2)(I_1 \widetilde{\otimes }Id_{c_0(B_{E_2'})})'(EXT_1)(\rho)(e_1,e_2) \\
& = (EXT_2)(I_1 \widetilde{\otimes }Id_{c_0(B_{E_2'})})'(EXT_1)(\rho)(e_1,I_2(e_2)) \\
& = I_2(e_2) \Big( (I_1 \widetilde{\otimes }Id_{c_0(B_{E_2'})})'(EXT_1)(\rho)(e_1,\cdot) \Big) \\
& = I_2(e_2) \Big( a \mapsto I_1(e_1) \rho (\cdot, a) \Big) \\
& = I_2(e_2) \Big( (EXT_1)(\rho)(I_1(e_1),\cdot) \Big) \\
& =(EXT) (\rho) (I_1(e_1),I_2(e_2)) \\
& = (I_1 \widetilde{\otimes} I_2)' (EXT) (\rho) (e_1,e_2).
\end{align*}
\end{proof}

Now, this proposition shows that the diagram

\begin{equation*}
\xymatrix{( \widetilde{\otimes}_{i=1}^{n} \ell_1(B_{E_i'}), \s \alpha / ) \ar@{->>}[r] \ar@{->>}[d]^{P}
& \Big( \widetilde{\otimes}_{i=1}^{n} c_0(B_{E_i'}), / \alpha' \s  \Big)' \ar@{->>}[d]^{\Psi}  \\
(\widetilde{\otimes}_{i=1}^{n} E_{i}', \s \alpha / ) \ar[r]
& \Big( \widetilde{\otimes}_{i=1}^{n} E_i, / \alpha' \s  \Big)' }
\end{equation*}
conmutes and, by the proof of the Theorem~\ref{Lewis theorem}, we have that, for $E_1, \dots, E_n$ Asplund spaces, the map $\Psi$ is a metric surjection.

The next remark will be very useful. It can be proved following carefully the proof of Proposition~\ref{cociente} and using Proposition~\ref{reescritura del mapa}.
\begin{remark}\label{comentario}
Let $E$ an Asplund space and $A: \ell_\infty(B_{E'})' \to \ell_1(B_{E'})$ be the operator obtained by the Lewis-Stegall Theorem  with $\|A\|\leq 1 + \varepsilon$ as in diagram (\ref{factorizacion}).
Given $\varphi \in \Big( \widetilde{\otimes}_{i=1}^{n}  E, / \alpha' \s  \Big)'$, if we take a Hahn-Banach extension $\widetilde{\varphi} \in \Big(\widetilde{\otimes}_{i=1}^{n}  \ell_\infty(B_{E'}), / \alpha' \s  \Big)'$, then the linear functional $\rho \in \Big( \widetilde{\otimes}_{i=1}^{n}  c_0(B_{E'}), / \alpha' \s  \Big)'$ given by
\begin{equation} \label{identidad piola}
\rho(a_1, \dots, a_n) := (EXT) (\widetilde{\varphi})(A'J(a_1), \dots, A'J(a_n)),
\end{equation}
satisfies $\Psi(\rho)=\varphi$ and $\|\rho\| \leq \| \varphi \| (1+\varepsilon)^n$.
\end{remark}

We end this section with the statement of the non-symmetric versions of Theorem \ref{RN para tensores sim}, Corollary \ref{Coro sim 1} and Corollary \ref{Coro sim 2}, which readily follow:
\begin{theorem} \label{RN para tensores}
Let $E_1, \dots, E_n$ be Banach spaces and $\alpha$ a full symmetric tensor norm with sRN. The tensor product $\big( E_1 \widetilde{\otimes} \dots \widetilde{\otimes}  E_n, / \alpha' \s  \big)$ is Asplund if an only if $E_i$ is Asplund for $i=1\dots n$ .
\end{theorem}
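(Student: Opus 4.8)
The plan is to follow the proof of Theorem~\ref{RN para tensores sim} almost verbatim, replacing symmetric tensor products by full ones and Theorem~\ref{Lewis theorem for polynomials} by its multilinear counterpart, Theorem~\ref{Lewis theorem}. Necessity is essentially automatic: fixing nonzero $e_j\in E_j$ and $\phi_j\in E_j'$ with $\phi_j(e_j)=1$, the maps $x\mapsto e_1\otimes\cdots\otimes e_{i-1}\otimes x\otimes e_{i+1}\otimes\cdots\otimes e_n$ and $\bigl(\otimes_{j\neq i}\phi_j\bigr)\otimes Id_{E_i}$ are bounded for any finitely generated full tensor norm (by $\varepsilon\le{/\alpha'\s}\le\pi$ and property $(2)$), and their composite is the identity on $E_i$; hence $E_i$ is isomorphic to a complemented subspace of $(\widetilde{\otimes}_{i=1}^n E_i,{/\alpha'\s})$, and the Asplund property is inherited by subspaces.

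For the converse, assume each $E_i$ is Asplund. It suffices to show that every separable subspace $S\subseteq(\widetilde{\otimes}_{i=1}^n E_i,{/\alpha'\s})$ has separable dual. First I would choose a sequence $(z_k)_k$ of elementary tensors whose closed linear span contains $S$, write $z_k=\sum_{l=1}^{r(k)}x_{1,l}^k\otimes\cdots\otimes x_{n,l}^k$, and set $F_i:=\overline{[\,x_{i,l}^k: 1\le l\le r(k),\, k\in\mathbb{N}\,]}\subseteq E_i$, a separable closed subspace. Since ${/\alpha'\s}$ is injective, the natural map $(\widetilde{\otimes}_{i=1}^n F_i,{/\alpha'\s})\hookrightarrow(\widetilde{\otimes}_{i=1}^n E_i,{/\alpha'\s})$ is an isometric embedding whose range is the closed span of the elementary tensors with all factors in the $F_i$; in particular it contains every $z_k$, hence contains $S$. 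Thus $S$ embeds isometrically into $(\widetilde{\otimes}_{i=1}^n F_i,{/\alpha'\s})$.

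Now each $F_i$, being a subspace of the Asplund space $E_i$, is Asplund, and being separable it has separable dual $F_i'$. Applying Theorem~\ref{Lewis theorem} to $\alpha$ and the Asplund spaces $F_1,\dots,F_n$ gives a metric surjection $(\widetilde{\otimes}_{i=1}^n F_i',\s\alpha/)\twoheadrightarrow(\widetilde{\otimes}_{i=1}^n F_i,{/\alpha'\s})'$. The domain is separable, because the algebraic tensor product of countable dense subsets of the $F_i'$ is dense (using $\s\alpha/\le\pi$), so $(\widetilde{\otimes}_{i=1}^n F_i,{/\alpha'\s})'$ is separable. Finally, dualizing the isometric embedding $S\hookrightarrow(\widetilde{\otimes}_{i=1}^n F_i,{/\alpha'\s})$ produces a quotient map $(\widetilde{\otimes}_{i=1}^n F_i,{/\alpha'\s})'\twoheadrightarrow S'$, whence $S'$ is separable. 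As $S$ was an arbitrary separable subspace, $(\widetilde{\otimes}_{i=1}^n E_i,{/\alpha'\s})$ is Asplund.

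The genuine difficulty is entirely contained in Theorem~\ref{Lewis theorem}, which is already at our disposal; granting it, the present argument is bookkeeping. Within this proof the only points needing (routine) care are that injectivity of ${/\alpha'\s}$ truly identifies $(\widetilde{\otimes}_{i=1}^n F_i,{/\alpha'\s})$ with the closed span of the pertinent elementary tensors inside $(\widetilde{\otimes}_{i=1}^n E_i,{/\alpha'\s})$, and the separability of $(\widetilde{\otimes}_{i=1}^n F_i',\s\alpha/)$; neither is an obstacle, so I expect the proof to be short.
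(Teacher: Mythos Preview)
Your proposal is correct and follows essentially the same route the paper intends: the paper does not spell out a proof of Theorem~\ref{RN para tensores} but states that it ``readily follows'' as the non-symmetric analogue of Theorem~\ref{RN para tensores sim}, and your argument is precisely that analogue, with Theorem~\ref{Lewis theorem} replacing Theorem~\ref{Lewis theorem for polynomials} and the separable subspaces $F_i\subseteq E_i$ playing the role of $F\subseteq E$. The only addition you make is the explicit complementation argument for necessity, which the paper leaves implicit.
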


\begin{corollary} \label{Coro sim 1 full}
Let $\alpha$ be a projective full symmetric tensor norm with the sRN property and $E_1, \dots, E_n $ dual Banach spaces with the approximation property. Then, $\big( E_1 \widetilde{\otimes} \dots \widetilde{\otimes}  E_n,  \alpha \big)$ has the Radon-Nikodým property if and only if every $E_i$ does.
\end{corollary}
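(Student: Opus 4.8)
The plan is to mimic the proof of the symmetric statement, Corollary \ref{Coro sim 1}, word for word, replacing the single space $E$ by the $n$-tuple $(E_1,\dots,E_n)$ and the s-tensor norm $\beta$ by the full symmetric tensor norm $\alpha$. Assume each $E_i$ has the Radon-Nikod\'ym property (the converse being trivial, since each $E_i$ is norm-one complemented in the tensor product, say by fixing norm-one functionals in the other coordinates). Write $E_i = F_i'$ with $F_i$ a predual and note that each $F_i$, being the predual of a dual space with RNP, is Asplund (a dual space has RNP iff it is the dual of an Asplund space, equivalently iff its predual — when dual — is Asplund).

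The key steps, in order: \emph{(1)} By Theorem \ref{RN para tensores}, since every $F_i$ is Asplund, the tensor product $\big(F_1 \widetilde{\otimes}\dots\widetilde{\otimes} F_n, /\alpha'\s\big)$ is Asplund. \emph{(2)} Apply Theorem \ref{Lewis theorem} to the Asplund spaces $F_1,\dots,F_n$: the canonical map $\big(\widetilde{\otimes}_{i=1}^n F_i', \s\alpha/\big) \twoheadrightarrow \big(\widetilde{\otimes}_{i=1}^n F_i, /\alpha'\s\big)'$ is a metric surjection; and since $\alpha$ is projective, $\s\alpha/ = \alpha$, so in fact $\big(\widetilde{\otimes}_{i=1}^n E_i, \alpha\big) \twoheadrightarrow \big(\widetilde{\otimes}_{i=1}^n F_i, /\alpha'\s\big)'$. \emph{(3)} Use the approximation property of each $E_i = F_i'$ to upgrade this metric surjection to an isometric isomorphism: as in the proof of Corollary \ref{Coro sim 1}, injectivity follows by factoring through $\widetilde{\otimes}_{i=1}^n \ell_{\infty}(B_{E_i'})$ (or through $\widetilde{\otimes}_{\pi}$) and invoking the injectivity of the relevant canonical map from \cite[Section 4.3]{Flo97}; one needs the full (non-symmetric) analogue of that injectivity statement for tensor products of distinct spaces with the a.p., which is standard. \emph{(4)} Conclude that $\big(\widetilde{\otimes}_{i=1}^n E_i, \alpha\big)$ is isometrically the dual of the Asplund space $\big(\widetilde{\otimes}_{i=1}^n F_i, /\alpha'\s\big)$, hence has the Radon-Nikod\'ym property.

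I expect the main obstacle to be step \emph{(3)}: verifying that the approximation property of the $E_i$ really does force injectivity of the canonical arrow $\big(\widetilde{\otimes}_{i=1}^n E_i,\alpha\big)\to \big(\widetilde{\otimes}_{i=1}^n F_i,/\alpha'\s\big)'$ in the full (non-symmetric, multi-space) setting. In the symmetric case this was quoted from \cite{Flo97}; here one must either cite the corresponding non-symmetric fact from \cite{DefFlo93} or reprove it by the same diagram chase, checking that a.p. of each factor suffices to separate points of the $\alpha$-tensor product via the embedding into $\widetilde\otimes_{\pi}$ followed by $\widetilde\otimes_{\infty}$. All the other steps are immediate transcriptions of the symmetric arguments and require no new idea.
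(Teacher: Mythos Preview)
Your proposal is correct and follows exactly the route the paper intends: the authors state Corollary~\ref{Coro sim 1 full} (together with Theorem~\ref{RN para tensores} and Corollary~\ref{Coro sim 2 full}) as ``the non-symmetric versions \dots\ which readily follow,'' and your steps (1)--(4) are precisely the straightforward transcription of the proof of Corollary~\ref{Coro sim 1} to the full tensor setting, using Theorems~\ref{RN para tensores} and~\ref{Lewis theorem} in place of their symmetric counterparts. Your identification of step~(3) as the only point requiring a word of justification is also apt; the needed injectivity in the full case is indeed standard (it is the classical fact, available in \cite{DefFlo93}, that the approximation property of the factors makes the canonical map $\widetilde{\otimes}_{\pi} E_i \to \mathcal{L}(E_1,\dots,E_{n-1};E_n'')$ injective).
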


\begin{corollary} \label{Coro sim 2 full}
Let $\alpha$ be a projective full symmetric tensor norm with the sRN property and $E_1, \dots, E_n$ be Banch spaces.
\begin{itemize}
\item[a)] If every $E_i$ has a boundedly complete Schauder basis, then so does $\big( E_1 \widetilde{\otimes} \dots \widetilde{\otimes}  E_n,  \alpha  \big)$.
\item[b)]  If every $E_i$ has a shrinking Schauder basis, then so does $\big( E_1 \widetilde{\otimes} \dots \widetilde{\otimes}  E_n,  \alpha'  \big)$.
\end{itemize}
\end{corollary}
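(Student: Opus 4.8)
The plan is to follow the discussion preceding Corollary~\ref{Coro sim 2} and its proof, replacing symmetric tensor products by full ones and using Theorem~\ref{Lewis theorem} together with the argument behind Corollary~\ref{Coro sim 1 full} where the symmetric statements used Theorem~\ref{Lewis theorem for polynomials} and Corollary~\ref{Coro sim 1}. The single analytic ingredient needed is the isometric identity: if each $E_i$ is a dual space, say $E_i=F_i'$ with $F_i$ Asplund and $E_i$ having the approximation property, then
$$\big(E_1\widetilde{\otimes}\dots\widetilde{\otimes}E_n,\alpha\big)\overset{1}{=}\big(F_1\widetilde{\otimes}\dots\widetilde{\otimes}F_n,\alpha'\big)'.$$
Indeed, Theorem~\ref{Lewis theorem} applied to the projective norm $\alpha$ (so that $\s\alpha/=\alpha$ and $/\alpha'\s=\alpha'$) yields the metric surjection $(F_1'\widetilde{\otimes}\dots\widetilde{\otimes}F_n',\alpha)\twoheadrightarrow(F_1\widetilde{\otimes}\dots\widetilde{\otimes}F_n,\alpha')'$, and, exactly as in the proof of Corollary~\ref{Coro sim 1}, the approximation property of the spaces $E_i=F_i'$ forces this map to be injective, via the full/multilinear analogue of the injectivity statement in~\cite[Section 4.3]{Flo97}.

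For part (a), suppose each $E_i$ has a boundedly complete basis $\{e^{(i)}_k\}_k$. Let $F_i$ be the closed span of the coefficient functionals $\{(e^{(i)}_k)'\}_k$ in $E_i'$; by James' theorems, $\{(e^{(i)}_k)'\}_k$ is a shrinking basis of $F_i$ and $E_i$ is canonically the dual of $F_i$. In particular each $F_i$ is Asplund and each $E_i=F_i'$ has the approximation property, so the isometric identity above applies. Since each $F_i$ (resp.\ each $E_i$) has a Schauder basis, the elementary tensors of basis vectors, suitably ordered, form a Schauder basis of $(F_1\widetilde{\otimes}\dots\widetilde{\otimes}F_n,\alpha')$ (resp.\ of $(E_1\widetilde{\otimes}\dots\widetilde{\otimes}E_n,\alpha)$), and the first of these is shrinking because each $\{(e^{(i)}_k)'\}_k$ is. From $\langle e^{(1)}_{k_1}\otimes\dots\otimes e^{(n)}_{k_n},\, (e^{(1)}_{m_1})'\otimes\dots\otimes(e^{(n)}_{m_n})'\rangle=\prod_{i=1}^n\delta_{k_i,m_i}$ one sees that the elementary-tensor basis of $(E_1\widetilde{\otimes}\dots\widetilde{\otimes}E_n,\alpha)$ is precisely the system of coefficient functionals of the shrinking elementary-tensor basis of $(F_1\widetilde{\otimes}\dots\widetilde{\otimes}F_n,\alpha')$; hence, by the displayed identity and James' theorem, it is boundedly complete. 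This proves (a).

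Part (b) is the mirror argument: if each $E_i$ has a shrinking basis $\{e^{(i)}_k\}_k$, put $F_i=E_i'$; then each $E_i$ is Asplund and each $F_i=E_i'$ has a basis (hence the approximation property), so the isometric identity holds in the form $(E_1'\widetilde{\otimes}\dots\widetilde{\otimes}E_n',\alpha)\overset{1}{=}(E_1\widetilde{\otimes}\dots\widetilde{\otimes}E_n,\alpha')'$. The elementary-tensor basis of $(E_1\widetilde{\otimes}\dots\widetilde{\otimes}E_n,\alpha')$ coming from the $\{e^{(i)}_k\}_k$ is then shrinking, since its coefficient functionals are the elementary tensors of the $\{(e^{(i)}_k)'\}_k$, whose closed span is all of $(E_1'\widetilde{\otimes}\dots\widetilde{\otimes}E_n',\alpha)$.

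The routine parts are the existence and ordering of the elementary-tensor basis of a full $\alpha$-tensor product and the passage of ``shrinking'' and ``boundedly complete'' through it --- classical facts, the two-fold case going back to Gelbaum and Gil de Lamadrid and the general case obtained by iterating over the $n$ factors --- together with James' characterizations of these two classes of bases. The main obstacle is the isometric identity of the first paragraph: one must verify that the approximation-property/injectivity argument from the proof of Corollary~\ref{Coro sim 1}, i.e.\ the full and multilinear version of~\cite[Section 4.3]{Flo97}, carries over verbatim to full (symmetric) tensor norms; once that is in hand, everything else is bookkeeping.
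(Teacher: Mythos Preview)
Your argument is correct and follows exactly the route the paper indicates: the paper gives no separate proof of Corollary~\ref{Coro sim 2 full}, stating only that it ``readily follows'' from the discussion preceding Corollary~\ref{Coro sim 2}, and you reproduce that discussion with full tensors, Theorem~\ref{Lewis theorem}, and the injectivity argument from Corollary~\ref{Coro sim 1} in place of their symmetric analogues. One small wording issue: the clause ``the first of these is shrinking because each $\{(e^{(i)}_k)'\}_k$ is'' gives the wrong reason---tensor products of shrinking bases need not be shrinking in general---but the correct justification is already in your next sentence, since the isometric identity shows that the coefficient functionals span the whole dual, which is precisely the characterization of a shrinking basis.
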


\section{The proof of Theorem~\ref{Lewis theorem for polynomials} and some questions}\label{proof of big heorem}
%

To prove Theorem~\ref{Lewis theorem} we used a multilinear version of the \emph{Extension Lemma} whose proof follows identical to the one in \cite[6.7.]{DefFlo93}. For polynomials a similar result is needed:

\begin{proposition}\cite[Corollary 3.4.]{CarGal09AB} \label{isom AB maximales} Let $\beta$ a finitely generated s-tensor norm. For each $P \in \big( \widetilde{\otimes}^{n,s}_\beta E \big)'$ its Aron-Berner extension $AB(P)$ of $P$ belongs to
$\big( \widetilde{\otimes}^{n,s}_\beta E'' \big)'$ and
$$ \|P\|_{\big( \widetilde{\otimes}^{n,s}_\beta E \big)'} = \|AB(P)\|_{\big( \widetilde{\otimes}^{n,s}_\beta E''
\big)'}.$$
\end{proposition}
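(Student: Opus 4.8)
\textbf{Proof proposal for Proposition~\ref{isom AB maximales}.}

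The plan is to reduce the polynomial statement to the already-known multilinear Extension Lemma (the $n$-linear analogue of \cite[6.7.]{DefFlo93}) by exploiting the standard correspondence between $n$-homogeneous polynomials on $E$ and symmetric $n$-linear forms on $E^n$, together with the compatibility of the Aron--Berner extension with symmetrization. More precisely, I would first recall that the metric surjection/isometry $\widetilde\otimes_{\pi_s}^{n,s}E \hookrightarrow \widetilde\otimes_\pi^n E$ induced by the symmetrization projection $\sigma$ intertwines, on the dual side, the identification of $P\in(\widetilde\otimes_\beta^{n,s}E)'$ with its associated symmetric multilinear form $\check P$, which lies in $(\widetilde\otimes_{\beta_f}^n E)'$ for the full tensor norm $\beta_f$ naturally attached to $\beta$ (the one making the symmetrization $\sigma:\widetilde\otimes_{\beta_f}^n E\to\widetilde\otimes_\beta^{n,s}E$ a norm-one projection onto the symmetric part). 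The key point is that $\|P\|_{(\widetilde\otimes_\beta^{n,s}E)'}=\|\check P\|_{(\widetilde\otimes_{\beta_f}^n E)'}$, since $P\mapsto\check P$ is precisely precomposition with a norm-one projection whose range contains all the relevant functionals.

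Next I would invoke the multilinear Extension Lemma: for the finitely generated full tensor norm $\beta_f$, the canonical extension $EXT=(EXT_n)\circ\cdots\circ(EXT_1)$ of $\check P$ to $(E'')^n$ satisfies $\|EXT(\check P)\|_{(\widetilde\otimes_{\beta_f}^n E'')'}=\|\check P\|_{(\widetilde\otimes_{\beta_f}^n E)'}$. The remaining step is to identify $EXT(\check P)$ with $\widecheck{AB(P)}$, i.e. to check that the iterated canonical extension of the symmetric multilinear form $\check P$ is again symmetric and coincides with the symmetric multilinear form associated to the Aron--Berner extension $AB(P)$. This is a classical fact about the Aron--Berner extension (the order of the iterated weak-star limits is immaterial on the symmetric part, and $AB$ is by definition built from exactly these iterated limits); once it is in hand, applying $\|\,\cdot\,\|_{(\widetilde\otimes_\beta^{n,s}E'')'}=\|\,\widecheck{\;\cdot\;}\,\|_{(\widetilde\otimes_{\beta_f}^n E'')'}$ on the bidual side closes the chain of equalities:
$$\|P\|_{(\widetilde\otimes_\beta^{n,s}E)'}=\|\check P\|_{(\widetilde\otimes_{\beta_f}^n E)'}=\|EXT(\check P)\|_{(\widetilde\otimes_{\beta_f}^n E'')'}=\|AB(P)\|_{(\widetilde\otimes_\beta^{n,s}E'')'}.$$

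I expect the main obstacle to be the bookkeeping in the identification $EXT(\check P)=\widecheck{AB(P)}$, in particular verifying that passing from $E$ to $E''$ coordinate by coordinate does not leave the symmetric tensor product, and that the norm-one projection $\sigma$ on $\widetilde\otimes_{\beta_f}^n E''$ is compatible with the one on $\widetilde\otimes_{\beta_f}^n E$ under the extension maps. Since the statement is quoted from \cite[Corollary 3.4.]{CarGal09AB}, in the paper itself this can be dispatched by citation; for a self-contained argument one would spell out the weak-star-density argument showing $EXT_k$ commutes with $\sigma$ and that the finitely-generated hypothesis on $\beta$ (hence on $\beta_f$) is exactly what makes the Extension Lemma applicable.
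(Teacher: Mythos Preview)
Your reduction to the multilinear Extension Lemma hinges on the existence of a finitely generated full tensor norm $\beta_f$ for which $(\otimes^{n,s}E,\beta)$ sits as an isometric, norm-one complemented subspace of $(\otimes^n E,\beta_f)$, so that $\|P\|_{(\widetilde\otimes_\beta^{n,s}E)'}=\|\check P\|_{(\widetilde\otimes_{\beta_f}^n E)'}$. This is exactly the step that fails. Already for $\beta=\pi_s$ and the only natural candidate $\beta_f=\pi$ one has, on the dual side,
\[
\|P\|_{\mathcal P^n(E)}\le\|\check P\|_{\mathcal L^n(E)}\le\frac{n^n}{n!}\,\|P\|_{\mathcal P^n(E)},
\]
and the polarization constant $n^n/n!$ is sharp (take $P(x)=x_1x_2$ on $\ell_1^2$). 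Floret's correspondence between s-tensor norms and symmetric full tensor norms is only an isomorphism up to these constants, not an isometry, so no $\beta_f$ with the property you postulate is available in general. Running your chain of inequalities one therefore only obtains $\|AB(P)\|\le\frac{n^n}{n!}\|P\|$, which together with the trivial $\|P\|\le\|AB(P)\|$ does not close to an equality.

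The paper itself does not give a self-contained argument either: it quotes the statement from \cite{CarGal09AB}, where it is proved via the isometry of the iterated extension to ultrapowers for maximal polynomial ideals, and observes that one can trade ultrapowers for the principle of local reflexivity. Both routes work \emph{inside} the symmetric tensor product: using that $\beta$ is finitely generated one reduces $\|AB(P)\|$ to a supremum over finite-dimensional $M\subset E''$, and then local reflexivity (or local determination of ultrapowers) replaces $M$ by an almost-isometric copy in $E$. That is how the polarization constants are avoided. If you want a self-contained proof, spell out the local-reflexivity version; the detour through a full tensor norm $\beta_f$, as you propose it, cannot deliver the isometric conclusion.
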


This was obtained as a consequence of the isometry of the iterated extension to ultrapowers for maximal polynomial ideals. However, this can also be proved without the ultrapower techniques: just use local reflexivity instead of local determination of ultrapowers and proceed as in \cite{CarGal09AB}.

\begin{proof} (of Theorem~\ref{Lewis theorem for polynomials})

As in the multilinear case, the next diagram commutes:

\begin{equation*}
\xymatrix{ \widetilde{\otimes}_{\s \beta /}^{n,s} \ell_1(B_{E'}) \ar@{->>}[r] \ar@{->>}[d]^{\widetilde{\otimes}^{n,s}P}
& \Big( \widetilde{\otimes}_{/ \beta' \s}^{n,s} c_0(B_{E'}) \Big)' \ar[d]^{\Psi}  \\
\widetilde{\otimes}_{\s \beta /}^{n,s} E' \ar[r]
& \Big( \widetilde{\otimes}_{/ \beta' \s}^{n,s} E \Big)' },
\end{equation*}
where $\Psi$ is the composition map
$$ \Big( \widetilde{\otimes}_{/ \beta' \s}^{n,s} c_0(B_{E'}) \Big)' \overset{AB}\longrightarrow \Big( \widetilde{\otimes}_{/ \beta' \s}^{n,s} \ell_{\infty}(B_{E'}) \Big)' \overset{(\widetilde{\otimes}^{n,s}I)'}\longrightarrow \Big( \widetilde{\otimes}_{/ \beta' \s}^{n,s} E \Big)'.$$

Fix $P \in \Big( \widetilde{\otimes}_{/ \beta' \s}^{n,s} E \Big)'$. Denote by $\overline{P}~\in~\Big( \widetilde{\otimes}_{/ \beta' \s}^{n,s} \ell_{\infty}(B_{E'}) \Big)'$ a Hahn-Banach extension of $P$ and by $A$ an operator obtained from the Lewis-Stegall theorem such that $\|A\| \leq 1 + \varepsilon$ (see diagram (\ref{factorizacion})).  Since the Aron-Berner is an isometry for maximal ideals (Proposition~\ref{isom AB maximales}) we have, as in Remark~\ref{comentario}, that the linear functional $L \in \Big(\widetilde{\otimes}_{/ \beta' \s}^{n,s} c_0(B_{E'}) \Big)'$ given by $L(a):= (AB)(\overline{P})(A'J_{c_0(B_{E'})}a)$ satisfies that $\Psi(L)=P$ and $\|L\|_{\Big(\widetilde{\otimes}_{/ \beta' \s}^{n,s} c_0(B_{E'}) \Big)'}\leq \|P\|_{\Big( \widetilde{\otimes}_{/ \beta' \s}^{n,s} E \Big)'} (1+ \varepsilon)^n.$ Thus, $\Psi$ is a metric surjection and, by the diagram, we easily get that $\widetilde{\otimes}_{\s \beta /}^{n,s} E' \rightarrow  \Big( \widetilde{\otimes}_{/ \beta' \s}^{n,s} E \Big)'$ is also a metric surjection.
\end{proof}

\smallskip

We conclude the article with a couple of questions:

Since we do not know of any example of an s-tensor without the sRN property, we ask: \emph{Does every s-tensor norm have the sRN property?}

A more precise, but not necessarily easier, question is the following: \emph{Does $/ \pi_s \s$ have the sRN property?} In the case of a positive answer we would have that every natural s-tensor norm (see \cite{CarGal09Nat}) have this property.

\end{document}